\definecolor{egyptianblue}{rgb}{0.06, 0.2, 0.65}
\newtheorem{lemma}{Lemma}
\newtheorem{proposition}[lemma]{Proposition}
\newtheorem{corollary}[lemma]{Corollary}
\newtheorem{theorem}[lemma]{Theorem}
\theoremstyle{definition}
\theoremstyle{remark} 
\newtheorem*{remark}{Remark}
\newcommand{\A}{\mathbb{A}}
\newcommand{\C}{\mathbb{C}}
\newcommand{\D}{\mathbb{D}}
\newcommand{\G}{\mathbb{G}}
\newcommand{\R}{\mathbb{R}}
\newcommand{\Z}{\mathbb{Z}}
\DeclareMathOperator{\Hom}{Hom}
\DeclareMathOperator{\Mod}{Mod}
\DeclareMathOperator{\Coh}{Coh}
\DeclareMathOperator{\Perf}{Perf}
\DeclareMathOperator{\Sh}{Sh}
\DeclareMathOperator{\Fuk}{Fuk}
\DeclareMathSymbol{\shortminus}{\mathbin}{AMSa}{"39}
\begin{document}

\title{Toric mirror monodromies and Lagrangian spheres}

\author{Vivek Shende}

\begin{abstract}
The central fiber of a Gross-Siebert type toric degeneration is known to
satisfy homological mirror symmetry: its category of coherent sheaves
is equivalent to the wrapped Fukaya category of a certain exact symplectic manifold.  Here we show that, in the Calabi-Yau case, the images of line bundles are represented by  Lagrangian spheres.  
\end{abstract}

\maketitle

\epigraph{And their appearance, and their work, was as it were a wheel in the middle of a wheel.}{Ezekiel 1:16}

\section{Introduction}

Let $X$ be a Weinstein symplectic manifold, i.e. an exact symplectic manifold obtained from a standard ball by iterated handle attachment along Legendrian spheres.  These handle attachments come with `co-core' disks, which are known to generate the wrapped Fukaya category $\Fuk(X)$ \cite{CDGG, GPS2}.

Let $L \subset X$ be a compact exact Lagrangian, possibly carrying a finite rank local system; or, more generally, possibly immersed and carrying the appropriate structures per \cite{akaho-joyce}.  It follows from the definitions that for any, possibly noncompact, $T \subset X$, the Hom space $\Hom(T, L)$ is finite rank.  We write $\Fuk_{cpt}(X)$ for the idempotent completion of the full subcategory generated by such compact Lagrangians.  More generally, we say some $M \in \Fuk(X)$ is {\em pseudo-perfect} if always $\Hom(T, M)$ is finite rank.  We write $\Fuk(X)^{pp}$ for full subcategory on such objects.\footnote{It is more typical to use this notation of the category of pseudo-perfect modules;  since $\Fuk(X)$ is known to be smooth \cite{ganatra-duality}, all pseudo-perfect modules are representable and so there is no ambiguity.} 
That is, we have an inclusion $\Fuk_{cpt}(X) \subset \Fuk(X)^{pp}$.  
It is expected, but not known, that this inclusion is an equality.  When this holds, we say $X$ has {\em enough compact Lagrangians}.  

One can ask similar questions for Lagrangians with prescribed topology; 
we write $\Fuk_{cemb}(X)$ for the subcategory  split-generated by 
compact embedded Lagrangians carrying finite rank local systems, and $\Fuk_{sph}(X)$ for the subcategory  split-generated by embedded Lagrangian spheres.  So, 
$$\Fuk_{sph}(X) \subset \Fuk_{cemb}(X) \subset \Fuk_{cpt}(X) \subset \Fuk(X)^{pp}$$
We correspondingly say that $X$ has {\em enough compact embedded Lagrangians} (resp.  {\em enough Lagrangian spheres}) if the composition of the last two (resp. all three) inclusions is an equality.  Note however that these properties are not generally valid for Weinstein manifolds: e.g., it follows from 
\cite{Abouzaid-cotangent} that cotangent bundles have enough compact embedded Lagrangians, but from \cite{Fukaya-Seidel-Smith, Nadler-cotangent} that they do not have enough Lagrangian spheres unless the base manifold was itself a homotopy sphere.  Meanwhile, the results of \cite{dimitroglou-whitneysphere} at least strongly suggest that $\C^2 \setminus \{xy=1\}$ does not have enough compact embedded Lagrangians.

The existence of enough compact Lagrangians is of particular relevance in the context of mirror symmetry.  It is often comparatively easy to prove homological mirror symmetry results for Weinstein $X$, especially now after the localization theorems \cite{GPS2, GPS3}. 
Suppose given some such:
\begin{equation}\label{noncompact mirror}\Fuk(X) = \Coh(Y)\end{equation}
Such $Y$ is necessarily singular or noncompact \cite{ganatra-proper}.  In the present article we will consider a situation where $Y$ is compact (and hence singular).  In such cases, and certainly for those of interest here, it is expected that smoothing $Y$ corresponds under mirror symmetry to compactifying $X$.  That is, one expects a compactification $X \subset \overline{X}$, a smoothing $\widetilde{Y}$ of $Y$, and an isomorphism \begin{equation}\mathrm{Fuk}^?(\overline{X}) \cong \Coh(\widetilde{Y}).\end{equation}  
Here the question mark serves to remind that we remain ambivalent amongst possible definitions of the Fukaya category of a compact symplectic manifold.  But insofar as the objects of such a category are compact Lagrangians in $\overline{X}$, it is only $\Fuk_{cpt}(X)$ which directly deforms to $\Fuk(\overline{X})$.  More forcefully: it is not known to follow from \eqref{noncompact mirror} that $X$ or $\overline{X}$ has any compact Lagrangians at all!  
Note also that when there are enough compact Lagrangians, one can compose identifications
$\Fuk_{cpt}(X) = \Fuk(X)^{pp} = \Coh(Y)^{pp} = \Perf(Y)$.  (Recall we assumed $Y$ is proper.)  

For deeper and more detailed considerations around the topic of the previous paragraph, we refer to the recent proof of homological mirror symmetry for Batyrev mirror pairs \cite{GHHPS}.  Per the authors of that  sizeable work, the ``bulk of the paper'' was concerned with establishing the following result (under slightly more restrictive hypotheses): 

\begin{theorem} \label{fiber enough lagrangians} \cite{GHHPS}
    Let $T$ be a smooth toric DM stack, and $Z_T: (\C^*)^n \to \C$ the mirror Lefschetz fibration.  Then the \cite{Gammage-Shende-very-affine} mirror symmetry between $\partial T$ and the general fiber of $Z_T$ carries line bundles pulled back along $\partial T \to T$ to Lagrangian spheres in $Z_T$.  
\end{theorem}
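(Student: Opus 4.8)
The plan is to reduce to a generating collection of line bundles, verify the statement there directly, and then propagate it over all of $\Pic(T)$ using a group of symplectomorphisms of the general fiber --- the ``toric mirror monodromies'' --- which realize the line-bundle twists. \textbf{Step 1 (a generating exceptional collection).} Since $T$ is a smooth toric DM stack, $\Perf(T)$ is split-generated by a finite strong exceptional collection of line bundles $\mathcal{O}_T(\chi_\alpha)$, where $\alpha$ runs over the lattice points of a ``window'' $W \subset \Pic(T)_\R$ (a Beilinson/Bondal-type tilting bundle). Under the Landau--Ginzburg refinement of the \cite{Gammage-Shende-very-affine} equivalence used in \cite{GHHPS} --- in which the Fukaya--Seidel category of $Z_T$ is identified with $\Perf(T)$, and the restriction $\Perf(T) \to \Perf(\partial T)$ with the ``cap'' / stop-removal functor \cite{GPS3} to the wrapped Fukaya category of the general fiber $H := Z_T^{-1}(\mathrm{pt})$ --- each $\mathcal{O}_T(\chi_\alpha)$ goes to a Lefschetz thimble of $Z_T$, so its restriction $\mathcal{O}_T(\chi_\alpha)|_{\partial T}$ goes to the associated vanishing cycle $V_\alpha$, an embedded Lagrangian sphere in $H$. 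Thus the theorem holds for the $\mathcal{O}_T(\chi_\alpha)|_{\partial T}$.

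\textbf{Step 2 (the monodromies).} For a general $D \in \Pic(T)$, write $D = \chi_\alpha + D'$; the point is to realize $- \otimes \mathcal{O}_T(D')|_{\partial T}$ on the fiber category by an honest symplectomorphism $\phi_{D'}$ of $H$. The complexified K\"ahler moduli of $T$ is an open subset of $\Pic(T) \otimes \C^* \cong (\C^*)^r$, realized on the mirror side as the space of torus-rescalings of the coefficients of the Laurent polynomial $Z_T$; its fundamental group acts on the wrapped category of $H$ by autoequivalences, and the monodromy about each large-complex-structure boundary divisor implements tensoring by the corresponding line bundle. The essential claim is that near that boundary these monodromies are \emph{geometric}: in suitably normalized coefficient coordinates the symplectic parallel transport of the hypersurface $H$ is an explicit torus-rescaling-type symplectomorphism, so $\phi_{D'}$ exists and carries embedded Lagrangian spheres to embedded Lagrangian spheres. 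I expect this step --- proving the large-volume monodromies of the mirror hypersurface family are represented by genuine symplectomorphisms of $H$, not merely by abstract autoequivalences, and matching them precisely to the line-bundle twists --- to be the technical heart of the argument, and indeed it is the content advertised by the title.

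\textbf{Step 3 (assembly).} Combining the two steps, $\mathcal{O}_T(D)|_{\partial T} = \mathcal{O}_T(\chi_\alpha)|_{\partial T} \otimes \mathcal{O}_T(D')|_{\partial T}$ corresponds under the \cite{Gammage-Shende-very-affine} equivalence to $\phi_{D'}(V_\alpha)$, an embedded Lagrangian sphere. This uses that every $D$ admits such a decomposition, i.e. that $\Pic(T) = \bigcup_\alpha (\chi_\alpha + \Lambda^{\mathrm{mon}})$, where $\Lambda^{\mathrm{mon}} \subseteq \Pic(T)$ is the lattice of monodromy-realized classes from Step 2. This is the purely combinatorial statement that the window $W$ is a fundamental domain for $\Lambda^{\mathrm{mon}}$, which holds because $\Lambda^{\mathrm{mon}}$ is precisely the period lattice tiling $\Pic(T)_\R$ by translates of $W$. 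With Steps 1 and 3 drawn from \cite{Gammage-Shende-very-affine}, \cite{GHHPS}, the standard toric thimble/exceptional-collection dictionary, and elementary polytope combinatorics, the theorem follows once Step 2 is established.
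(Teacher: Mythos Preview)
Your overall architecture --- identify a base case, then propagate by symplectic monodromies realizing line-bundle twists --- matches the paper's, and you are right that Step~2 is the technical heart.  But the base case you choose is quite different from the paper's, and this difference is the point of the paper.

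Your Step~1 invokes the thimble/exceptional-collection dictionary to identify a whole window of line bundles with vanishing cycles.  That dictionary lives in the Floer-theoretic toric mirror symmetry of \cite{abouzaid-toric, Hanlon-Hicks}, and matching it to the \cite{Gammage-Shende-very-affine} equivalence requires exactly the comparison theorems that (per the Remark after the theorem) constitute the ``bulk'' of the \cite{GHHPS} argument.  So as written, your Step~1 either silently imports most of \cite{GHHPS} or is circular.  The paper's replacement is much cheaper: it needs only the single line bundle $\mathcal{O}$, identified with the skyscraper at $0$ by the monoidality of the FLTZ functor (Lemma~\ref{O}); the mirror of $\mathcal{O}_{\partial T}$ is then a microsheaf on the cosphere $S^*_0(M_\R/M)$, which Proposition~\ref{annoying obvious fact} turns into a Fukaya object supported on that sphere.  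Everything stays in the constructible-sheaf model until the final invocation of \cite{GPS3}.

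Relatedly, your Step~3 is unnecessary.  The monodromy representation is defined on all of $\Z^n/M = \Pic(T)$ (Equation~\eqref{monodromies} and Proposition~\ref{weak intertwining}), not on a proper sublattice, so no window/fundamental-domain combinatorics is needed: one base sphere plus the full $\Pic(T)$-monodromy already reaches every line bundle.  With the base case reduced to $\mathcal{O}$ and $\Lambda^{\mathrm{mon}} = \Pic(T)$, the assembly is a single line.
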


\begin{corollary}
    $Z_T$ has enough Lagrangian spheres, and in fact, a finite collection split-generate. 
\end{corollary}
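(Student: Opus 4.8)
The plan is to combine Theorem~\ref{fiber enough lagrangians} with the generation of $\Perf$ of a projective scheme by twists of an ample line bundle. Write $j\colon \partial T\hookrightarrow T$ for the inclusion and let $\Phi$ denote the \cite{Gammage-Shende-very-affine} equivalence $\Fuk(Z_T)\simeq\Coh(\partial T)$; by Theorem~\ref{fiber enough lagrangians}, $\Phi^{-1}(j^{*}L)$ is an embedded Lagrangian sphere for every $L\in\mathrm{Pic}(T)$. So the corollary follows once we exhibit finitely many line bundles on $T$ whose restrictions to $\partial T$ split-generate $\Fuk(Z_T)^{pp}$.

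The first point is to identify $\Fuk(Z_T)^{pp}$ on the coherent side. An equivalence of pretriangulated categories preserves pseudo-perfect objects, so $\Phi$ carries $\Fuk(Z_T)^{pp}$ onto the full subcategory of pseudo-perfect objects of $\Coh(\partial T)$. Here $\partial T$ is a proper divisor in the smooth $T$, hence lci and in particular Gorenstein; a standard argument using properness and the Gorenstein property then identifies the pseudo-perfect objects of $\Coh(\partial T)$ with the perfect complexes (finiteness of $\Hom$ against all skyscrapers forces locally finite injective, hence finite projective, dimension, while conversely a perfect complex has finite-dimensional total $\Hom$ against every coherent object once $\partial T$ is proper). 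Thus $\Phi$ restricts to an equivalence $\Fuk(Z_T)^{pp}\simeq\Perf(\partial T)$, and each $j^{*}L$, being a line bundle, lies in $\Fuk(Z_T)^{pp}$, consistently with $\Phi^{-1}(j^{*}L)$ being compact.

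Now $\partial T$ is projective, and we may take its polarization to be $\mathcal{O}_{\partial T}(1)=j^{*}\mathcal{O}_{T}(1)$ for some ample $\mathcal{O}_{T}(1)$ on $T$, so that $\mathcal{O}_{\partial T}(n)=j^{*}\mathcal{O}_{T}(n)$ for every $n$. By the theory of compact generation for projective schemes, the twists $\{\mathcal{O}_{\partial T}(n)\}_{n\in\mathbb{Z}}$ split-generate $\Perf(\partial T)$, and $\Perf(\partial T)$ has a single classical generator; since the latter, being one perfect complex, lies in the thick closure of finitely many of the $\mathcal{O}_{\partial T}(n)$, already a finite subfamily $\{\mathcal{O}_{\partial T}(n)\}_{n\in J}$ (for some finite $J\subset\mathbb{Z}$) split-generates $\Perf(\partial T)$. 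Transporting through $\Phi$: the finitely many embedded Lagrangian spheres $S_{n}:=\Phi^{-1}(\mathcal{O}_{\partial T}(n))$, $n\in J$, split-generate $\Fuk(Z_T)^{pp}$. Hence $\Fuk_{sph}(Z_T)=\Fuk(Z_T)^{pp}$: $Z_T$ has enough Lagrangian spheres, and the finite collection $\{S_n\}_{n\in J}$ split-generates.

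The step I expect to be the main obstacle is the identification $\Fuk(Z_T)^{pp}\simeq\Perf(\partial T)$: one must be careful about which model of the category of coherent sheaves occurs in \cite{Gammage-Shende-very-affine}, and the argument genuinely uses that $\partial T$ is proper --- which is where the Calabi--Yau hypothesis enters, and without which even ``a finite collection split-generates'' can fail (already for $Z_T=\C^{*}\cong T^{*}S^{1}$, whose pseudo-perfect category is not finitely split-generated). It is likewise this point which explains why the corollary concerns $\Fuk(Z_T)^{pp}$ rather than $\Fuk(Z_T)$: as $\partial T$ is singular, $\Perf(\partial T)$ is a proper subcategory of $\Coh(\partial T)\simeq\Fuk(Z_T)$, the latter being split-generated not by the Lagrangian spheres above but by the noncompact co-core disks of \cite{CDGG, GPS2}.
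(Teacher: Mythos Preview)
Your argument for the nonstacky case is correct and essentially matches the paper's: both reduce to showing that finitely many restrictions $j^*\mathcal{O}_T(n)$ split-generate $\Perf(\partial T)$, and both appeal to Orlov's result \cite[Thm.~4]{Orlov-dimension} on ample twists generating $\Perf$ of a projective scheme.  Your discussion of why $\Coh(\partial T)^{pp}=\Perf(\partial T)$ is more careful than the paper's, which had already recorded this in the introduction for proper $Y$.

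However, the statement is for a smooth toric \emph{DM stack} $T$, and your argument does not cover this generality: for a smooth proper toric DM stack one cannot simply say ``$\partial T$ is projective, pick an ample line bundle, and apply Orlov.''  The paper handles the stacky case by a different route: it invokes the results of \cite{Favero-Huang, Hanlon-Hicks-Lazarev} that $\Coh(T)$ is generated by an explicit finite collection of line bundles, together with the observation (extracted from the argument of \cite{Orlov-dimension}) that if $\Coh(X)$ is generated by a collection of vector bundles and $Y\subset X$ is closed, then $\Perf(Y)$ is generated by the pullbacks of those bundles.  This is what you are missing.

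A minor point: your remark that ``the Calabi--Yau hypothesis enters'' to ensure properness of $\partial T$ is off.  There is no Calabi--Yau hypothesis in the corollary; properness of $\partial T$ follows from completeness of the fan of $T$, which is implicit in the setup of the mirror Lefschetz fibration $Z_T$.
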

\begin{proof}
    It remains to show that $\mathrm{Perf}(\partial T)$ is generated by line bundles. In the nonstacky case: a smooth toric variety $T$ is projective, hence so is $\partial T$, hence finitely many powers of (a restriction of) an ample line bundle will generate $\mathrm{Perf}(\partial T)$ \cite[Thm. 4]{Orlov-dimension}.  More generally,  the argument of loc. cit. shows  that if $\Coh(X)$ is generated by a given collection of vector bundles, and $Y \subset X$ is closed, then $\Perf(Y)$ is generated by pullbacks of these bundles. But for a toric DM stack $T$, it is known that $\Coh(T)$ is generated by line bundles; in fact, an explicit finite collection thereof \cite{Favero-Huang, Hanlon-Hicks-Lazarev}.
\end{proof}

\begin{remark}
    The inputs to the proof of Theorem \ref{fiber enough lagrangians} in  \cite{GHHPS} are that (a) \cite{abouzaid-toric} identifies line bundles with disks whose boundaries are essentially the desired spheres  and (b) \cite{Gammage-Shende-very-affine} explains how toric mirror symmetry passes to the boundary.  One reason for the length of the argument in \cite{GHHPS} is that (a) and (b) are proved in different setups, so some comparison theorems are needed.\footnote{It seems to us that  these comparisons could have been avoided by using \cite{Zhou-ccc} instead of \cite{abouzaid-toric} to identify the images of line bundles under mirror symmetry.  We do not use either of these results.}  Additionally, \cite{GHHPS} needed (c) a result explaining how the pullback of pseudoperfect modules along the covariant inclusion functor of  \cite{GPS1} can sometimes be realized geometrically.
\end{remark}

We will give a proof of Theorem \ref{fiber enough lagrangians} which on the A-side remains entirely within a constructible sheaf model until a final invokation of \cite{GPS3}.  That is, we do not  rely on or have to prove comparisons to the `Floer-theoretic' approach to toric mirror symmetry, i.e. \cite{Abouzaid-cotangent, hanlon-monodromy, Hanlon-Hicks}; as a result, our proof is perhaps less bulky.  Instead we use the fact that our recent account of toric mirror symmetry \cite{shende-toric} naturally included a construction of certain symplectic monodromies.  Below, we check that these realize the autoequivalences mirror to tensoring by line bundles (Proposition \ref{weak intertwining}, Theorem \ref{strong intertwining}).  This reduces Theorem \ref{fiber enough lagrangians} to characterizing the image of $\mathcal{O}$, which in the sheaf formalism is immediate from the monoidality of \cite{FLTZ2} toric mirror symmetry (Lemma \ref{O}).  Passing to Fukaya categories requires a characterization of how the \cite{GPS3} sheaf-Fukaya equivalence acts on compact Lagrangians, which we give in Proposition \ref{viterbo commutes with GPS}. 

\vspace{2mm}

Let us turn to our main goal, which is 
to prove the analogue of Theorem \ref{fiber enough lagrangians} for the mirrors to the central fibers $Y_0$ of Gross-Siebert \cite{Gross-Siebert} toric degenerations $Y_t$.
Such $Y_0$ are built from toric varieties glued together along their boundary strata.  (We will call such structures `toric buildings'.)  Toric boundaries themselves provide some, but certainly not all, examples of toric buildings. 

To organize the mirror symmetry of toric buildings, we introduced in \cite{Gammage-Shende-very-affine}  certain structures called `fanifolds'.  Informally, a fanifold $\Phi$ is a stratified subset $|\Phi|$ of a manifold $\Omega$ together with data equipping the normal geometry to each stratum with the structure of a fan.  We also assume all strata are contractible.  
To such a structure we associated in \cite{Gammage-Shende-very-affine} a toric variety $\mathbf{T}(\Phi)$ and a Weinstein manifold $\mathbf{W}(\Phi)$, and proved the mirror symmetry 
\begin{equation} \label{fanifold mirror} \Coh (\mathbf{T}(\Phi)) = \Fuk(\mathbf{W}(\Phi))
\end{equation}
The basic idea was to glue together toric mirror symmetry.  

We say the fanifold is `closed' when $|\Phi| = \Omega$ is a closed manifold.  
In this case, by gluing together the toric mirror monodromies, we prove here:

\begin{theorem} \label{fanifold enough Lagrangians} Let $\Phi$ be a closed fanifold with simplicial fans.  Then the mirror symmetry \eqref{fanifold mirror} carries line bundles on $\mathbf{T}(\Phi)$ to local systems on embedded compact Lagrangians in $\mathbf{W}(\Phi)$.  The Lagrangians are  diffeomorphic to $|\Phi|$. 
\end{theorem}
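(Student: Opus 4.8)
The plan is to run the same strategy as for Theorem~\ref{fiber enough lagrangians}, but now assembled across the strata of the fanifold $\Phi$. First I would recall from \cite{Gammage-Shende-very-affine} that $\mathbf{W}(\Phi)$ is built by gluing the local pieces $\mathbf{W}(\text{Star}(\sigma))$ attached to strata $\sigma$ of $|\Phi|$, and that under \eqref{fanifold mirror} this gluing is mirror to the Zariski (or rather Mayer--Vietoris) cover of $\mathbf{T}(\Phi)$ by the affine toric charts $\mathbf{T}(\text{Star}(\sigma))$. Correspondingly, for each stratum one has the local toric mirror monodromies from \cite{shende-toric}, which by Proposition~\ref{weak intertwining} and Theorem~\ref{strong intertwining} realize the autoequivalences mirror to tensoring by the local restrictions of a line bundle. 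The first real step is to check these local monodromies are compatible on overlaps — i.e., they glue to a global symplectomorphism $\phi_{\mathcal{L}}$ of $\mathbf{W}(\Phi)$ intertwining $- \otimes \mathcal{L}$ under \eqref{fanifold mirror}. Since a line bundle on $\mathbf{T}(\Phi)$ is precisely a compatible system of local line bundles with transition data, and \cite{Gammage-Shende-very-affine} already set up the gluing so that sheaf-theoretic restriction matches Viterbo/inclusion restriction of Fukaya categories, this should be a matter of bookkeeping with the cocycle, not a new geometric input.

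Given the global $\phi_{\mathcal{L}}$, the structure of the proof is then: (1) identify the image of $\mathcal{O}_{\mathbf{T}(\Phi)}$ under \eqref{fanifold mirror}; (2) show it is a local system on an embedded compact Lagrangian diffeomorphic to $|\Phi|$; (3) conclude for an arbitrary line bundle $\mathcal{L}$ by applying $\phi_{\mathcal{L}}$, which carries $\mathcal{O} \mapsto \mathcal{L}$ and, being a symplectomorphism, carries embedded compact Lagrangians to embedded compact Lagrangians of the same diffeomorphism type. For (1), in the constructible-sheaf model the structure sheaf corresponds — by the monoidality of \cite{FLTZ2}-type toric mirror symmetry glued over $\Phi$ (the fanifold analogue of Lemma~\ref{O}) — to the constant sheaf on the zero section $|\Phi| \subset \Omega$, or more precisely to the skeleton built from the zero sections of the local $T^*$-charts. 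For (2), the point is that when $\Phi$ is \emph{closed}, these local zero sections glue to a global compact Lagrangian: each chart contributes the cotangent zero section over a contractible stratum, the fan data dictates how these are glued along the conormal directions, and closedness of $|\Phi|=\Omega$ ensures the result has no boundary. Tracking the gluing shows the underlying smooth manifold is exactly $\Omega = |\Phi|$, and the sheaf on it is the (rank-one, twisted by the chosen data) local system corresponding to $\mathcal{O}$. Finally, passing from the constructible-sheaf category to the genuine wrapped Fukaya category uses \cite{GPS3} together with the compatibility of that equivalence with compact objects, exactly as in Proposition~\ref{viterbo commutes with GPS}.

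The main obstacle I expect is step~(2): showing that the locally-defined zero-section Lagrangians glue to a \emph{globally embedded} compact Lagrangian diffeomorphic to $|\Phi|$, rather than merely an immersed or cleanly-self-intersecting one. Locally over each stratum the Lagrangian is a cotangent fiber zero section, which is embedded; the subtlety is entirely in the overlaps, where the fanifold's normal-fan data prescribes how charts are symplectically glued, and one must verify that the naive union does not develop self-intersections and that the smooth gluing genuinely reconstructs $|\Phi|$ as opposed to some branched cover or real blow-up of it. I would handle this by working chart-by-chart with the explicit local models of \cite{Gammage-Shende-very-affine}: in each $T^*$-chart the relevant Lagrangian is cut out by the vanishing of the fiber coordinates transverse to the stratum directions, the transition maps between charts are the symplectomorphisms coming from the fan inclusions, and one checks these transition maps carry zero section to zero section compatibly — which, because the fans are simplicial (so the local toric charts are quotients of affine space and the monodromies are the explicitly-described ones from \cite{shende-toric}), reduces to a finite computation. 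The simpliciality hypothesis is presumably exactly what makes each such local computation tractable and the gluing unambiguous.
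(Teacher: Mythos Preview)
Your high-level strategy---glue the toric mirror monodromies to a global family, use the resulting symplectic parallel transport to reduce an arbitrary line bundle to a known case, identify that case explicitly on the skeleton, then invoke Proposition~\ref{annoying obvious fact}---is exactly the paper's.  But two of your concrete steps are off in ways that matter.

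\textbf{Which Lagrangian.}  Locally over a point stratum $\sigma$, the mirror functor lands in $\Sh_{\Lambda_0(\sigma)}(M(\sigma)_\R/M(\sigma))$, and Lemma~\ref{O} sends $\mathcal{O}_{\mathbf{T}(\sigma)}$ to the \emph{skyscraper} $\Z_0$ at the origin of that torus, not to a constant sheaf on a zero section.  The supporting Lagrangian is therefore the cotangent \emph{fiber} at $0$ (whose intersection with the skeleton is the fan itself), not the zero section of any $T^*$-chart.  The paper packages the global object as the image of a section $s:\Phi\to\mathbb{L}(\Phi)$ of the natural projection; this section picks out exactly the cotangent-fiber-at-zero piece over each point stratum, and is diffeomorphic to $|\Phi|$ because the fan at $\sigma$ is by definition identified with the star of $\sigma$ in $\Phi$.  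With the correct local model, your ``main obstacle'' largely evaporates: embeddedness is automatic because $s$ is a section.

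\textbf{What the monodromy reduces to.}  More seriously, the global monodromy you construct cannot in general carry an arbitrary $\mathcal{L}$ to $\mathcal{O}$.  The parameter space of the glued family has fundamental group $H^0(\Phi,\Z^r/M)$, while
\[
0\to H^1(\Phi,\G_m)\to \mathrm{Pic}(\mathbf{T}(\Phi))\xrightarrow{\deg} H^0(\Phi,\Z^r/M)\to 0,
\]
so a monodromy along $\gamma$ with $[\gamma]=\deg(\mathcal{L})$ only brings $\mathcal{L}$ into $\mathrm{Pic}_0$, the kernel of $\deg$.  The ``bookkeeping with the cocycle'' you defer is exactly the $H^1(\Phi,\G_m)$ part, and it is \emph{not} realized by any further symplectic monodromy.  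The paper's fix is not to look for more monodromies but to observe (Lemma~\ref{degree zero lines represented}) that every $\mathcal{L}'\in\mathrm{Pic}_0$ already restricts to $\mathcal{O}$ on each irreducible component $\mathbf{T}(\sigma)$, so by the local Lemma~\ref{O} and the commutativity of the restriction diagram, $\Xi(\mathcal{L}')$ is a rank-one microsheaf on $s(\Phi)$---possibly a nontrivial local system, which is why the theorem says ``local systems'' rather than ``the trivial local system''.  This is also why the paper can avoid ever analyzing the higher-codimension gluing data explicitly (see the closing Remark of Section~6).
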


In particular, when $\mathbf{T}(\Phi)$ is in addition nonstacky and projective, $\mathbf{W}(\Phi)$ has enough embedded compact Lagrangians.  (Again, we expect this in fact holds more generally.)  When $\mathbf{T}(\Phi)$ arises from a Calabi-Yau degeneration, the corresponding $|\Phi|$ is a sphere, so in this case $\mathbf{W}(\Phi)$ has enough Lagrangian spheres.  When $|\Phi| = \Omega$ is a manifold with boundary, the same argument shows that line bundles are send to local systems on forward-stopped Lagrangians diffeomorphic to $|\Phi|$. 

\begin{remark} The Lagrangians we obtain could likely also be constructed by gluing  the Lagrangians from \cite{abouzaid-toric} or \cite{Zhou-ccc}. \end{remark}

\begin{remark}
    Insofar as Theorem \ref{fiber enough lagrangians} was a key step in proving the mirror symmetry for compact Calabi-Yau hypersurfaces in toric varieties in \cite{GHHPS}, it is natural to ask, given Theorem \ref{fanifold enough Lagrangians}, how far are we from proving mirror symmetry for the Gross-Siebert general fiber?  Two main issues are the following.  First, it has not yet been shown that the space $\mathbf{W}(\Phi)$ is in fact the expected divisor complement in said fiber.  Second, the argument in \cite{GHHPS} seems to rest on the fact that the appropriate equivariantly graded pieces of the symplectic cohomology are either zero or one dimensional \cite[Prop. 6.6]{GHHPS}; I do not know whether an analogous result holds in the Gross-Siebert setup. 
\end{remark}

\vspace{2mm} \noindent {\bf Acknowledgements.}  
The main insights of this article came in the waters of the Black Sea, during the conference ``Complex Geometry at Large'', in Sunny Beach, Bulgaria, 2024.  

I am supported by Villum Fonden Villum Investigator grant 37814, Novo Nordisk Foundation grant NNF20OC0066298, and Danish National Research Foundation grant DNRF157.

\vspace{2mm} {\bf Conventions.} 
The notations $\Coh$, $\Fuk$, $\Sh$, $\mu sh$, etc. always mean the corresponding stable presentable 
$\infty$-categories, i.e. what elsewhere might be denoted $\mathrm{IndCoh}$ or $\Mod \Fuk$, etc.  In particular, any statement asserting generation means what in the symplectic geometry literature is called `split-generation'.

\section{Toric mirror monodromies}

Let us recall that the category of coherent sheaves on a toric variety can be identified with the category of constructible sheaves on a real torus with prescribed microsupport \cite{Bo, FLTZ2, Treumann, FLTZ3, Ku}.  This is a version of mirror symmetry for toric varieties; there are also equivariant and stacky variants of the statements.  

The simplest example is $\A^1/\G_m$.  Let $\Lambda \subset T^* \R$ be the union of the zero section and all negative conormals to integer points,  \cite{FLTZ2} identifies
\begin{align}  \label{equivariant mirror a1}
    \Coh(\A^1/\G_m) & \xrightarrow \sim \Sh_\Lambda(\R) \\
    \mathcal{O}(n) & \mapsto \Z_{(n, \infty)}
\end{align}
Here, $\mathcal{O}(n)$ denotes the structure sheaf twisted by the character $n \in \Z$, and $\Z_{(n, \infty)}$ denotes the extension by zero of the constant sheaf on $(n, \infty)$.  
The matching of Hom spaces is: 
\begin{equation} \label{fltz simplest coh}
\Hom(\mathcal{O}(n), \mathcal{O}(m)) = 
\mbox{degree zero elements of}\, \mathcal{O}(m-n) = \begin{cases} \Z \cdot z^{n-m}  & m \le n \\ 0 & \mbox{otherwise} \end{cases}    
\end{equation}
\begin{equation} \label{fltz simplest sh}
\Hom(\Z_{(n, \infty)}, \Z_{(m, \infty)}) = \Gamma(\D (\Z_{(n, \infty)} \otimes \D \Z_{(m, \infty)})) = \begin{cases} \Gamma (\Z_{[n,\infty)}) = \Z & m \le n \\ 
\Gamma(\Z_{(m, \infty)}) = 0 & \mbox{otherwise}  \end{cases}    
\end{equation}
These collections of objects generate their respective categories, and so the explicit matching above determines an equivalence of categories.  

We will be interested in the collection of autoequivalences 
\begin{eqnarray} 
    \mathcal{P} : \Z & \to & Aut(\Coh(\A^1/\G_m)) \\
    n & \mapsto & \otimes \mathcal{O}(n) 
\end{eqnarray}

We would like to realize this action on the mirror via monodromies.   To this end, consider $\R_x \times \R_t$, and define $\widetilde{\Lambda} \subset \R_x \times \R_t$ to be the union of the zero section and the negative conormals to all integer translates of the line $x = t$.  
This construction descends to $\R_x \times (\R_t/\Z)$.  We observe: 

\begin{lemma}\label{shift as monodromy}
    For any integer $n$, pullback along the inclusion
    $i_n: \R = \R_x \times \{n\} \subset \R_x\times \R_t$ induces an equivalence
    $$i_n^* : \Sh_{\widetilde{\Lambda}}(\R_x \times \R_t) \to \Sh_\Lambda(\R).$$
    The pushforward along $\R_x \times \R_t \to \R_x$ determines a local system of categories on $\R_t$, which descends canonically to $\R_t / \Z$.  The monodromy is calculated by the composition 
    $$\Sh_\Lambda(\R) \xrightarrow{(i_0^*)^{-1}}  \Sh_{\widetilde{\Lambda}}(\R_x \times \R_t) \xrightarrow{i_1^*} \Sh_\Lambda(\R),$$
    and is canonically equivalent to the pushforward along $+1: \R \to \R$. 
\end{lemma}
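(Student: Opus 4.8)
The plan is to write the inverse of $i_n^*$ down explicitly --- as pullback along a projection in the diagonal direction --- and then read off the monodromy by composing these concrete functors. The geometric input is that $\widetilde\Lambda$ lies in $\{\xi+\tau = 0\}\subset T^*(\R_x\times\R_t)$ and is invariant under the flow of $\xi+\tau$, i.e. under the diagonal translations $(x,t)\mapsto(x+a,t+a)$, which preserve every line $\{x = t+k\}$ and fix every covector. For $n\in\Z$ put $p_n\colon\R_x\times\R_t\to\R$, $p_n(x,t) = x-t+n$; this is the projection of $\R_x\times\R_t$ onto the slice $\R_x\times\{n\}$ along diagonal flow lines, so $p_n\circ i_n = \mathrm{id}$. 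Since $p_n$ is a submersion with $dp_n = dx-dt$, we have $\mathrm{SS}(p_n^*G) = \{(x,t;\,s,-s)\colon (x-t+n,\,s)\in\mathrm{SS}(G)\}$, and because $\Lambda$ and $\widetilde\Lambda$ are $\Z$-periodic in the base this shows $p_n^*$ sends $\Sh_\Lambda(\R)$ into $\Sh_{\widetilde\Lambda}(\R_x\times\R_t)$. The composite $i_n^*\circ p_n^*$ is the identity for free.

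The content is that $p_n^*\circ i_n^*\cong\mathrm{id}$. I would deform $\mathrm{id}_{\R_x\times\R_t}$ to $i_n\circ p_n$ through $h_s(x,t) = (x+s(n-t),\,t+s(n-t))$, which drags each point along its diagonal flow line, and consider $H(x,t,s) = h_s(x,t)$. A short computation gives that the $ds$-component of $H^*(\xi\,dx+\tau\,dt)$ is $(n-t)(\xi+\tau)$, which vanishes on $\widetilde\Lambda$; hence for $F\in\Sh_{\widetilde\Lambda}$ the microsupport of $H^*F$ has no $ds$-component, so by the non-characteristic deformation lemma $H^*F$ is pulled back from $\{s=0\}$. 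Restricting to $\{s=1\}$ gives a natural isomorphism $F\cong h_1^*F = p_n^*i_n^*F$, so $i_n^*$ is an equivalence with inverse $p_n^*$.

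The same diagonal-projection argument works verbatim over $\R_x\times J$ for any open interval $J\subseteq\R_t$ (the homotopy $h_s$ stays inside $\R_x\times J$ since $J$ is convex), so restriction to any fiber $\R_x\times\{t_0\}$ is an equivalence onto $\Sh_{\Lambda_{t_0}}(\R)$, where $\Lambda_{t_0}$ is the restriction of $\widetilde\Lambda$ --- the zero section together with the negative conormals to $t_0+\Z$. Thus $U\mapsto\Sh_{\widetilde\Lambda}(\R_x\times U)$ is a locally constant sheaf of stable $\infty$-categories on $\R_t$; as $\widetilde\Lambda$, and hence the family $\{\Lambda_{t_0}\}$ (with $\Lambda_{t_0+1} = \Lambda_{t_0}$ on the nose), is invariant under $t\mapsto t+1$, it descends to $\R_t/\Z$. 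Parallel transport over the contractible segment $[0,1]$ is computed by passing through the global category $\Sh_{\widetilde\Lambda}(\R_x\times\R_t)$, so the monodromy at $t_0 = 0$ is precisely $i_1^*\circ(i_0^*)^{-1}$ as an autoequivalence of $\Sh_{\Lambda_0}(\R) = \Sh_\Lambda(\R)$. By the first paragraph $(i_0^*)^{-1} = p_0^*$ with $p_0(x,t) = x-t$, so this autoequivalence is $F\mapsto i_1^*p_0^*F = (p_0\circ i_1)^*F$, and $p_0\circ i_1\colon\R\to\R$ is $x\mapsto x-1$; pullback along $x\mapsto x-1$ is the same as pushforward along $x\mapsto x+1$ (each has stalk at $x$ the stalk of $F$ at $x-1$), which is the claimed $(+1)_*$. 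One checks this sends $\Z_{(n,\infty)}$ to $\Z_{(n+1,\infty)}$, matching $\mathcal{O}(n)\mapsto\mathcal{O}(n+1)$.

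The main obstacle is the isomorphism $p_n^*\circ i_n^*\cong\mathrm{id}$: although the microsupport computations are routine in this linear picture, one has to apply the non-characteristic deformation lemma carefully and verify that the isomorphisms produced are natural and coherent enough to assemble into an honest local system of $\infty$-categories with a well-defined monodromy.
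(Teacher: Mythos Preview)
Your proof is correct and is essentially a careful unpacking of the paper's one-line argument, which reads in full: ``Obvious after pullback along $(x,t)\mapsto(x,t+x)$.''  That shear straightens the diagonal lines so that $\widetilde\Lambda$ becomes a product $\Lambda \times 0_{\R}$ (in suitable coordinates), whereupon restriction to any horizontal slice is visibly an equivalence and the monodromy is read off from how the shear conjugates the integer $t$-shifts; your diagonal projection $p_n$ and homotopy $h_s$ are exactly this shear and its associated flow, written out in the original coordinates with the non-characteristic propagation made explicit.
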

\begin{proof}
    Obvious after pullback along $(x, t) \mapsto (x, t+x)$. 
\end{proof}

We denote the monodromy as: 
\begin{equation} 
 \widetilde{\mathcal{M}} : \Z \to Aut(\Sh_\Lambda(\R)).
\end{equation}

\begin{lemma} \label{a1 monodromy intertwining} 
    The mirror symmetry \eqref{equivariant mirror a1} intertwines $\mathcal{P}$ and $\widetilde{\mathcal{M}}$. 
\end{lemma}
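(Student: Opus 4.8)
The plan is to verify the intertwining on generators, since the objects $\mathcal{O}(n)$ (equivalently $\Z_{(n,\infty)}$) split-generate the categories in question and $\mathcal{P}$, $\widetilde{\mathcal{M}}$ are both (by construction) exact autoequivalences. Concretely, I want to show that under the equivalence \eqref{equivariant mirror a1}, which sends $\mathcal{O}(m) \mapsto \Z_{(m,\infty)}$, the autoequivalence $\mathcal{P}(n) = {\otimes}\,\mathcal{O}(n)$ corresponds to $\widetilde{\mathcal{M}}(n)$. On the B-side, $\mathcal{P}(n)$ sends $\mathcal{O}(m) \mapsto \mathcal{O}(m) \otimes \mathcal{O}(n) = \mathcal{O}(m+n)$, hence on the A-side must send $\Z_{(m,\infty)} \mapsto \Z_{(m+n,\infty)}$. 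On the other hand, by Lemma \ref{shift as monodromy}, $\widetilde{\mathcal{M}}(n)$ is canonically equivalent to pushforward along the translation $+n : \R \to \R$, which sends $\Z_{(m,\infty)} \mapsto \Z_{(m+n,\infty)}$. So on objects the two autoequivalences agree.

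To upgrade this to a genuine equivalence of functors, I would check that both $\mathcal{P}(n)$ and $\widetilde{\mathcal{M}}(n)$ act compatibly on the morphism spaces \eqref{fltz simplest coh}, \eqref{fltz simplest sh}. For $\mathcal{P}(n)$ this is immediate: tensoring by a line bundle is the identity on $\Hom(\mathcal{O}(n'),\mathcal{O}(m')) = \Hom(\mathcal{O}(n'+n),\mathcal{O}(m'+n))$, both being the degree zero part of $\mathcal{O}(m'-n')$. For $\widetilde{\mathcal{M}}(n)$, translation $+n$ is a diffeomorphism of $\R$ carrying $\Lambda$ to itself (it permutes the negative conormals at integer points and preserves the zero section), so the induced functor on $\Sh_\Lambda(\R)$ is strictly monoidal in the relevant sense and acts as the identity on the Hom computation in \eqref{fltz simplest sh}. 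Since the matching of Hom spaces in \eqref{fltz simplest coh}--\eqref{fltz simplest sh} is precisely what pins down the equivalence \eqref{equivariant mirror a1}, and both autoequivalences are compatible with it in the same way, the two functors are identified. More structurally, one can simply observe that the equivalence \eqref{equivariant mirror a1} is $\Z$-equivariant for the evident actions, and Lemma \ref{shift as monodromy} identifies $\widetilde{\mathcal{M}}$ with that action on the A-side while the definition of $\mathcal{P}$ is that action on the B-side.

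The only subtlety — and the part one should be careful about rather than the part that is hard — is bookkeeping of signs and directions: whether the monodromy is $+1$ or $-1$, and whether $\mathcal{O}(n)$ maps to $\Z_{(n,\infty)}$ or to $\Z_{(-n,\infty)}$, must be made to match, but this is forced by the conventions already fixed in \eqref{equivariant mirror a1} and Lemma \ref{shift as monodromy}, so there is nothing genuinely to obstruct the argument. The proof is therefore essentially a one-line observation: both sides are the translation action of $\Z$, transported across \eqref{equivariant mirror a1}.
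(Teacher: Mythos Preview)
Your proposal is correct and is essentially the same as the paper's proof, just with more detail spelled out. The paper's entire argument is your final ``more structurally'' sentence: the equivalence \eqref{equivariant mirror a1} is visibly $\Z$-equivariant (translation on $\R$ matches $\otimes\,\mathcal{O}(n)$), and Lemma~\ref{shift as monodromy} identifies $\widetilde{\mathcal{M}}$ with the translation action.
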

\begin{proof}
    It is immediate from the definitions
    that \eqref{equivariant mirror a1} intertwines $\mathcal{P}$ with $\Z$-translation on $\R$.  The result now follows from Lemma \ref{shift as monodromy}. 
\end{proof}

In the remainder of this section, we explain a corresponding result for more general toric geometries.  The family of categories with the desired monodromy was  described already in our previous article \cite{shende-toric}.  Let us review the ideas of that article.   Consider a toric stack presented as $(\A^n \setminus Z) / G$ where $G \subset \G_m^n$ and $Z$ is a $G$-invariant union of coordinate planes.  We write $M$ for the character lattice of the residual torus, i.e. 
$$0 \to M \to \Z^n \to \Hom(G, \G_m) \to 0.$$

We described in \cite[Def. 6]{shende-toric} 
a closed subset $\Lambda_{Z} \subset \Lambda^n \subset (T^*\R)^n$; it descends to the quotient $\Lambda_{Z, M} \subset T^*(\R^n/M)$. 
Note that in these conventions, $\Lambda_{\emptyset, 0} = \Lambda^n \subset (T^* \R)^n$.  We showed in \cite[Prop. 10]{shende-toric}:  
\begin{equation} \label{equivariant mirror symmetry} \Xi:  \Coh((\A^n \setminus Z) / G) \cong \Sh_{\Lambda_{Z, M}} ( \R^n / M).\end{equation}
The derivation of \eqref{equivariant mirror symmetry} from \eqref{equivariant mirror a1} proceeds more or less by formal manipulations.  However, the right hand side of \eqref{equivariant mirror symmetry} is {\em not} the category considered by the previous authors \cite{Bo, FLTZ2, Treumann, FLTZ3, Zhou-ccc, Ku}.  To recover the latter, we considered the sequence 
\begin{equation} \label{reduction sequence}
0 \to M_{\R} / M \to \R^n / M \to \R^n / M_{\R} \to 0
\end{equation}
and the corresponding sequence of cotangent bundles.  For $\gamma  \in \R^n / M_\R$, we write 
\begin{equation} \label{at gamma} 
\Lambda_{Z, M, \gamma} \subset T^*(M_{\R} / M)
\end{equation} for the symplectic reduction.  

We proved in \cite{shende-toric} that when $(\A^n \setminus Z) / G$ is Deligne-Mumford (the corresponding fan is simplicial), the restriction of sheaves induces an equivalence 
\begin{equation} \rho_\gamma: \Sh_{\Lambda_{Z, M}} ( \R^n / M) \xrightarrow{\sim} \Sh_{\Lambda_{Z, M, \gamma}}(M_{\R} / M). \end{equation}
When $\gamma = 0$, the right hand side is the category considered previously.  We will write $\Xi_\gamma: = \rho_\gamma \circ \Xi$.  

It follows from the definitions that $\Lambda_{Z, M, \gamma}$ depends only on $\gamma \in \R^n / M_{\R}$ modulo the integer lattice $\Z^n / M$.  Thus the local system of categories over $\R^n / M_{\R}$ with fiber $\Sh_{\Lambda_{Z, M, \gamma}}(M_{\R} / M)$ descends to 
the torus $(\R^n / M_{\R}) / (\Z^n / M)$.  We obtain an action by monodromies: 
\begin{equation} \label{monodromies} 
\mathcal{M}: \Z^n/M \to Aut(\Sh_{\Lambda_{Z, M, 0}}(M_{\R} / M) ).
\end{equation}

Recall the  identification $Pic((\A^n \setminus Z) / G) = \Z^n / M$ \cite{Cox}. This identification can be described in terms of the pullback along
$(\A^n \setminus Z) / G \to B \G_m^n$, which gives a map 
$\Z^n = Pic(B\G_m^n) \to Pic((\A^n \setminus Z) / G)$. 
The map is known to be surjective with kernel $M$.  
Anyway, tensoring by line bundles defines another map 
\begin{equation}
    \mathcal{P}: \Z^n/M = Pic((\A^n \setminus Z) / G) \to Aut(\Coh((\A^n \setminus Z) / G))
\end{equation}
For $\eta \in \Z^n/M$, we will write $\mathcal{O}(\eta) \in Pic((\A^n \setminus Z) / G)$ for the corresponding line bundle. 

\vspace{2mm}
Mirror symmetry often realizes autoequivalences of the derived category of an algebraic variety via monodromies of symplectic manifolds, so it is natural to expect a corresponding intertwining of $\mathcal{M}$ and $\mathcal{P}$.  In fact, a similar statement, for a different construction of the symplectic monodromies and in a different model of toric mirror symmetry, has already appeared in \cite{hanlon-monodromy}.  
In the remainder of this section we construct this intertwining in our setup. 

\vspace{2mm}

\begin{remark}
It would be nontrivial to compute the monodromy action directly from \eqref{monodromies}.  As $\gamma$ varies, the (singular) Legendrians $\partial_\infty \Lambda_{Z, M, \gamma} \subset S^* (M_\R/M)$ are not even homeomorphic, so certainly are not related by some ambient contactomorphism. In particular, this means the monodromy cannot be calculated via the sheaf quantization of \cite{GKS}.    
\end{remark}

Similarly to the spirit of \cite{shende-toric}, we build from the known case of $\A^1/\G_m$.  We may re-do the construction of \eqref{equivariant mirror symmetry} using $\widetilde{\Lambda} \subset T^*(\R_x \times \R_t)$ in place of $\Lambda \subset T^*\R$.  The result is a local system of categories over $\R_t^n$
whose stalk at $\alpha \in \R_t^n$ is 
$\Sh_{\alpha + \Lambda_{Z, M}}(\R^n_x / M)$, where $+\alpha$ just means we translate everything in the base direction by $\alpha$.  Evidently, this local system of categories descends to $\R_t^n / M$.  We denote the resulting monodromy representation by: 
\begin{equation}
    \label{equivariant monodromy}
    \widetilde{\mathcal{M}}: \Z^n / M \to Aut(\Sh_{\Lambda_{Z, M}}(\R^n /M))
\end{equation}

We may compose with $\rho_0$ to get a representation 
$\rho_0^* \widetilde{\mathcal{M}}: \Z^n / M \to Aut(\Sh_{\Lambda_{Z, M, 0}}(\R^n /M))$.  Observe: 
\begin{lemma} \label{noneq monodromy from eq monodromy}
$\rho_0^* \widetilde{\mathcal{M}} = - \mathcal{M}$    
\end{lemma}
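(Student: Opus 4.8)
The plan is to make both monodromies explicit and then observe that, after transport through $\rho_0$, one is the pullback of the other along $\gamma\mapsto-\gamma$; all the content of the lemma is in a single sign.

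First I would record the two sides. By construction the local system underlying $\widetilde{\mathcal M}$ has stalk $\Sh_{\alpha+\Lambda_{Z,M}}(\R^n/M)$ at $\alpha$, and its monodromy around a loop $\eta\in\Z^n/M$ should be the pushforward $(T_\eta)_*$ along the translation $T_\eta\colon\R^n/M\to\R^n/M$ by $\eta$ (this is the point deferred to the last paragraph). On the other hand $\mathcal M$ is, by definition, the monodromy of the family $\{\Sh_{\Lambda_{Z,M,\gamma}}(M_{\R}/M)\}_\gamma$ whose local-system structure is transported from the constant family with fiber $\Sh_{\Lambda_{Z,M}}(\R^n/M)$ through the equivalences $\rho_\gamma$; hence $\mathcal M(\zeta)=\rho_{\bar\zeta}\circ\rho_0^{-1}$, where $\bar\zeta$ denotes the image of $\zeta$ in $\R^n/M_{\R}$ and we use that $\Lambda_{Z,M,\gamma}$ is periodic in $\gamma$ under $\Z^n/M$.

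Granting this, the lemma reduces to the single identity $\rho_0\circ(T_\eta)_*=\rho_{-\bar\eta}$ of functors $\Sh_{\Lambda_{Z,M}}(\R^n/M)\to\Sh_{\Lambda_{Z,M,0}}(M_{\R}/M)$: then $\rho_0^*\widetilde{\mathcal M}(\eta)=\rho_0(T_\eta)_*\rho_0^{-1}=\rho_{-\bar\eta}\rho_0^{-1}=\mathcal M(-\eta)$, that is, $\rho_0^*\widetilde{\mathcal M}=-\mathcal M$. This identity is just the statement that restricting the $\eta$-translate of a sheaf to the fiber of $\R^n/M\to\R^n/M_{\R}$ over $0$ coincides with restricting the sheaf itself to the fiber over $-\bar\eta$, the sign being the usual one that trades ``translate the object forward'' for ``translate the base point backward''. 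To see this on the nose, rather than merely up to an inner translation inside $M_{\R}/M$, I would trivialize the torus bundle $\R^n/M\to\R^n/M_{\R}$ using the integral splitting of \eqref{reduction sequence} — available because $\R^n/M_{\R}$ is contractible and $\Z^n/M$ maps, modulo torsion, isomorphically onto a lattice — so that the identifications of fibers with $M_{\R}/M$ built into $\rho_\gamma$ are compatible with $\Z^n/M$-translation.

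The step I expect to be the real work is the first one: checking that the $\widetilde\Lambda$-version of the construction \eqref{equivariant mirror symmetry} is compatible with the $\R_t^n$-translations, so that $\widetilde{\mathcal M}(\eta)$ is literally $(T_\eta)_*$ and not merely conjugate to it. Concretely this amounts to propagating the coordinate change that straightens the tilted Legendrian through the pullback, pushforward and descent steps of \cite{shende-toric} — the same move as ``pullback along $(x,t)\mapsto(x,t+x)$'' in the proof of Lemma \ref{shift as monodromy}, now carried out in the presence of $Z$ and $M$. Once that is in hand, the identification bookkeeping of the previous paragraph is routine given the conventions of \cite{shende-toric} for $\rho_\gamma$.
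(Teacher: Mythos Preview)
Your approach is correct and identifies the same sign mechanism as the paper, but it takes a longer route than necessary. The paper's proof is a single observation at the level of local systems rather than monodromy operators: writing $\Lambda_{Z,M}|_\gamma$ for the symplectic reduction at $\gamma$, one has $(\alpha+\Lambda_{Z,M})|_0 = \Lambda_{Z,M,-\alpha}$, so fiberwise restriction along $M_\R/M\hookrightarrow \R^n/M$ takes the family $\alpha\mapsto\Sh_{\alpha+\Lambda_{Z,M}}(\R^n/M)$ (which defines $\widetilde{\mathcal M}$) to the family $\alpha\mapsto\Sh_{\Lambda_{Z,M,-\alpha}}(M_\R/M)$ (which is the pullback of the family defining $\mathcal M$ along $\alpha\mapsto-\bar\alpha$). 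The sign on monodromies follows immediately.

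The practical difference is that the paper never needs to identify $\widetilde{\mathcal M}(\eta)$ with $(T_\eta)_*$, nor to worry about how the fibers $p^{-1}(\gamma)$ are identified with $M_\R/M$. What you call ``the real work'' --- propagating the straightening coordinate change through the functorialities of \cite{shende-toric} so that $\widetilde{\mathcal M}(\eta)$ is literally translation --- is simply bypassed: the comparison of the two families is made before either monodromy is computed. Your concern about choosing an integral splitting of \eqref{reduction sequence} likewise evaporates in the paper's formulation, since it compares local systems rather than individual functors $\rho_\gamma$. So your argument is sound, but you have correctly anticipated labor that the paper's phrasing makes unnecessary; the content reduces to the identity $(\alpha+\Lambda)|_0=\Lambda|_{-\alpha}$ for symplectic reduction of a translated conic set.
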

\begin{proof}
    Recall from \eqref{reduction sequence}, \eqref{at gamma} that $\Lambda_{Z, M, 0}$ was defined as a certain symplectic reduction of $\Lambda_{Z, M}$; let us write $\Lambda_{Z, M, 0} := \Lambda_{Z, M}|_0$.  Observe that in the corresponding notation, we have $$(\alpha + \Lambda_{Z, M})|_0 = \Lambda_{Z, M, -\alpha}.$$ 
    Thus, pullback of the fiber categories along $M_\R / M \to \R^n /  M$ determines an isomorphism between the local systems  of categories on $\R^n / M$ defining $\widetilde{\mathcal{M}}$ and $-\mathcal{M}$. 
\end{proof}

Thus it will suffice to compute $\widetilde{M}$. 

\begin{proposition} \label{weak intertwining}
    For $\eta, \nu \in \Z^n/M$, there is an isomorphism
    $\mathcal{M}(\eta) (\Xi(\mathcal{O}(\mathcal{\nu}))) \cong \Xi(\mathcal{O}(\eta + \nu)).$
\end{proposition}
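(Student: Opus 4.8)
The sentence preceding the Proposition announces that it suffices to compute $\widetilde{\mathcal{M}}$, so I will prove the intertwining for the equivariant monodromy $\widetilde{\mathcal{M}}$ acting on $\Sh_{\Lambda_{Z,M}}(\R^n/M)$, namely $\widetilde{\mathcal{M}}(\eta)(\Xi(\mathcal{O}(\nu))) \cong \Xi(\mathcal{O}(\eta+\nu))$; the version for the reduced monodromy $\mathcal{M}$ and the reduced equivalence $\Xi_0 = \rho_0\circ\Xi$ then follows by composing with $\rho_0$ and invoking Lemma \ref{noneq monodromy from eq monodromy}, which records the precise relation (a sign-twist) between the two. The whole assertion will be reduced to the one-dimensional case, where it is Lemma \ref{a1 monodromy intertwining}: the key observation is that both $\Xi$ (i.e. \eqref{equivariant mirror symmetry}) and the monodromy family behind $\widetilde{\mathcal{M}}$ are assembled from the $\A^1/\G_m$ data of \eqref{equivariant mirror a1} and Lemma \ref{shift as monodromy} by one and the same chain of formal operations from \cite{shende-toric}, so the one-dimensional intertwining propagates. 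As the Remark before the Proposition stresses, a direct attack on $\mathcal{M}$ via \eqref{monodromies} is infeasible, so factoring through $\widetilde{\mathcal{M}}$ is essential rather than cosmetic.

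For $n=1$, $Z=\emptyset$, $G=\G_m$, Lemma \ref{shift as monodromy} identifies $\widetilde{\mathcal{M}}(k)$ with pushforward along $+k: \R\to\R$, and Lemma \ref{a1 monodromy intertwining} records that under \eqref{equivariant mirror a1} this intertwines with $\otimes\,\mathcal{O}(k)$; concretely $\widetilde{\mathcal{M}}(k)\bigl(\Z_{(m,\infty)}\bigr)=\Z_{(m+k,\infty)}$, i.e. $\widetilde{\mathcal{M}}(k)(\Xi(\mathcal{O}(m)))=\Xi(\mathcal{O}(m+k))$. The $n$-fold box product of \eqref{equivariant mirror a1} is the equivalence $\Sh_{\Lambda^n}(\R^n)\cong\Coh(\A^n/\G_m^n)$, under which $\mathcal{O}(\widetilde\nu)=\boxtimes_i\mathcal{O}(\widetilde\nu_i)$ for $\widetilde\nu\in\Z^n$ and the monodromy $\widetilde{\mathcal{M}}(\widetilde\eta)$ is the box product of the one-dimensional ones, hence pushforward along translation by $\widetilde\eta$. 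Applying the base case in each coordinate gives $\widetilde{\mathcal{M}}(\widetilde\eta)(\Xi(\mathcal{O}(\widetilde\nu)))\cong\Xi(\mathcal{O}(\widetilde\eta+\widetilde\nu))$ for $\A^n/\G_m^n$ — this is just the statement that translating $\boxtimes_i\Z_{(\widetilde\nu_i,\infty)}$ by the integer vector $\widetilde\eta$ produces $\boxtimes_i\Z_{(\widetilde\nu_i+\widetilde\eta_i,\infty)}$.

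It remains to transport this along the two operations by which \cite[Prop.~10]{shende-toric} derives \eqref{equivariant mirror symmetry}: (i) restriction of coherent sheaves along the open inclusion $\A^n\setminus Z\hookrightarrow\A^n$, matched on the constructible side with the enlargement $\Lambda^n\rightsquigarrow\Lambda_Z$ of \cite[Def.~6]{shende-toric}; and (ii) the change of grading lattice $\Z^n\twoheadrightarrow\Z^n/M$, matched with descent of sheaves along $\R^n\to\R^n/M$. Since the monodromy family is by construction obtained by running this same derivation with $\widetilde\Lambda\subset T^*(\R_x\times\R_t)$ in place of $\Lambda$, the operator $\widetilde{\mathcal{M}}(\eta)$ for $\eta\in\Z^n/M$ is (the descent of) translation by any lift $\widetilde\eta\in\Z^n$: this is well defined modulo $M$, it commutes with (i) because $\Lambda_Z$ is $\Z^n$-translation invariant, and it is compatible with (ii) because the descents in the base direction $\R^n_x$ and in the monodromy direction $\R^n_t$ are taken along the same lattice $M$. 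Likewise $\Xi(\mathcal{O}(\nu))$ is the image under (i)--(ii) of $\boxtimes_i\Z_{(\widetilde\nu_i,\infty)}$, since the Cox description $Pic((\A^n\setminus Z)/G)=\Z^n/M$ of \cite{Cox} factors the line bundles through $Pic(B\G_m^n)=\Z^n$. Transporting the $\A^n/\G_m^n$ identity along the functorial operations (i)--(ii) yields $\widetilde{\mathcal{M}}(\eta)(\Xi(\mathcal{O}(\nu)))\cong\Xi(\mathcal{O}(\eta+\nu))$, and then Lemma \ref{noneq monodromy from eq monodromy} (together with $\rho_0$) gives the statement for $\mathcal{M}$, once the sign there is matched against the orientation of the monodromy loop and the convention in the Cox identification.

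I expect the only genuine work to be the bookkeeping in the previous paragraph — confirming that re-running the derivation of \eqref{equivariant mirror symmetry} with $\widetilde\Lambda$ for $\Lambda$ really yields the family described before \eqref{equivariant monodromy}, and that $\widetilde{\mathcal{M}}(\eta)$ is honestly translation by a lift of $\eta$, well defined mod $M$. This amounts to checking that the ``formal manipulations'' of \cite{shende-toric}, above all the descent along $\R^n\to\R^n/M$, are natural in the auxiliary $\R_t$-direction; the rest (compatibility with the $Z$-modification, and the one-dimensional input) is then immediate from the $\Z^n$-periodicity of $\Lambda_Z$ and Lemma \ref{a1 monodromy intertwining}. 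A last, routine point is to pin down the sign in Lemma \ref{noneq monodromy from eq monodromy} against the Cox identification in the final descent from $\widetilde{\mathcal{M}}$ to $\mathcal{M}$.
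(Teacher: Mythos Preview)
Your proposal is correct and follows essentially the same approach as the paper: reduce to the base case $\A^1/\G_m$ (Lemma \ref{a1 monodromy intertwining}), take products to obtain the result for $\A^n/\G_m^n$, and then transport along the two functorial operations (open restriction in $Z$, descent along $\R^n\to\R^n/M$) by observing that the entire construction of \cite{shende-toric} can be re-run with $\widetilde{\Lambda}$ in place of $\Lambda$. The paper packages exactly these two operations into the explicit commutative square \eqref{commuting diagram for monodromy} and argues that the outer square commutes with $\widetilde{\mathcal{M}}$ for the same reason you give; your verbal description and the paper's diagrammatic one are the same argument.
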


\begin{proof}
When $n = 1$, $Z = \emptyset$ and $M = 0$, i.e. for $\A^1 / \G_m$, this follows from Lemma \ref{a1 monodromy intertwining}.  We arrive at result for $\A^1 / \G_m^n$ by taking products. 
Now consider the diagram 

\[\begin{tikzcd}
	{\mathbb{A}^n/\mathbb{G}_{m}^n} & {(\mathbb{A}^n \setminus Z)/\mathbb{G}_m^n} \\
	{\mathbb{A}^n/G} & {(\mathbb{A}^n \setminus Z)/G}
	\arrow["{\iota_Z}"', hook', from=1-2, to=1-1]
	\arrow["{\pi_G}", from=2-1, to=1-1]
	\arrow["{\pi_G}"', from=2-2, to=1-2]
	\arrow["{\iota_Z}", hook', from=2-2, to=2-1]
\end{tikzcd}\]
The following functoriality is immediate from the construction of \cite{shende-toric}:

\begin{equation} \label{commuting diagram for monodromy} 
\begin{tikzcd}
	{\mathrm{Sh}_{\Lambda_{\emptyset, 0}}(\mathbb{R}^n)} &&& {\mathrm{Sh}_{\Lambda_{Z,0}}(\R^n)} \\
	& {\mathrm{Coh}(\mathbb{A}^n/\mathbb{G}_{m}^n)} & {\mathrm{Coh}((\mathbb{A}^n \setminus Z)/\mathbb{G}_m^n)} \\
	& {\mathrm{Coh}(\mathbb{A}^n/G)} & {\mathrm{Coh}((\mathbb{A}^n \setminus Z)/G)} \\
	{\mathrm{Sh}_{\Lambda_{\emptyset, M}}(\R^n/M)} &&& {\mathrm{Sh}_{\Lambda_{Z,M}}(\R^n/M)}
	\arrow[from=1-1, to=1-4]
	\arrow["{=}"{description}, no head, from=1-1, to=2-2]
	\arrow["{p_{M!}}", from=1-1, to=4-1]
	\arrow["{=}"{description}, no head, from=1-4, to=2-3]
	\arrow["{p_{M!}}"', from=1-4, to=4-4]
	\arrow["{\iota_{Z}^*}", from=2-2, to=2-3]
	\arrow["{\pi_G^*}"', from=2-2, to=3-2]
	\arrow["{\pi_G^*}", from=2-3, to=3-3]
	\arrow["{\iota_{Z}^*}"', from=3-2, to=3-3]
	\arrow["{=}"{description}, no head, from=3-2, to=4-1]
	\arrow[from=4-1, to=4-4]
	\arrow["{=}"{description}, no head, from=4-4, to=3-3]
\end{tikzcd}
\end{equation}
Here, the outer horizontal arrows are `stop removal' functors, i.e. adjoints to the natural inclusion of categories going in the other direction (recall $\Lambda_{Z, M} \subset \Lambda_{\emptyset, M}$).  
All the $\mathcal{O}(\eta)$ on $\Coh((\A^n \setminus Z)/G)$ are images of the corresponding objects in $\Coh(\A^n / \G_m^n)$.  Thus, to establish the proposition, it suffices to check it for $\Coh(\A^n / \G_m^n)$ and to argue that the outer square commutes with
$\widetilde{\mathcal{M}}$.  This final commutativity
follows from the fact that we could make the the outer square using $\widetilde{\Lambda}$ instead of $\Lambda$. 
This completes the proof. 
\end{proof}

Lemma \ref{noneq monodromy from eq monodromy} and Proposition \ref{weak intertwining} are already enough for our geometric applications.  However, one may wish to know the a priori stronger assertion

\begin{theorem} \label{strong intertwining}
    $\Xi_0$ interwines $\mathcal{M}$ and $\mathcal{P}$. 
\end{theorem}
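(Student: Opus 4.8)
The goal is to upgrade Proposition \ref{weak intertwining}, which only matches the two monodromy/tensoring actions on the generating objects $\mathcal{O}(\nu)$, to a full statement of intertwining of the $\Z^n/M$-actions $\mathcal{M}$ and $\mathcal{P}$. The issue is the usual one: an equivalence sending a generating set to a generating set compatibly with morphisms is determined, but to say two \emph{group actions by autoequivalences} agree one must check the isomorphisms $\mathcal{M}(\eta)(\Xi(\mathcal{O}(\nu))) \cong \Xi(\mathcal{O}(\eta+\nu))$ are natural in $\nu$ and coherent in $\eta$, i.e. produce an isomorphism of functors $\Xi_0 \circ \mathcal{P}(\eta) \cong \mathcal{M}(\eta) \circ \Xi_0$, and moreover compatibly as $\eta$ varies.

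First I would reduce, exactly as in the proof of Proposition \ref{weak intertwining}, via the commuting diagram \eqref{commuting diagram for monodromy}, to the case of $\Coh(\A^n/\G_m^n)$ versus $\Sh_{\Lambda_{\emptyset,0}}$ (equivalently $\Sh_{\Lambda_{\emptyset, M}}(\R^n/M)$ after $p_{M!}$): the stop-removal functors and the pullback functors $\pi_G^*$, $\iota_Z^*$ are all exact and colimit-preserving, and by construction they intertwine the respective $\widetilde{\mathcal{M}}$'s (the outer square can be built using $\widetilde\Lambda$ in place of $\Lambda$), so an intertwining isomorphism of functors upstairs propagates downstairs. Then I would further reduce to $n=1$, $Z=\emptyset$, $M=0$ by taking external products: both $\mathcal{P}$ and $\widetilde{\mathcal{M}}$ on $\A^n/\G_m^n$ are the $n$-fold external products of their counterparts for $\A^1/\G_m$, and the relevant equivalence $\Xi$ is a Künneth/external-product functor, so it suffices to intertwine on each factor.

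The core case is therefore: show that under the equivalence \eqref{equivariant mirror a1} the autoequivalence $\mathcal{P}(n) = (-)\otimes\mathcal{O}(n)$ corresponds, as a functor (not just on objects), to the monodromy $\widetilde{\mathcal{M}}(n)$, which by Lemma \ref{shift as monodromy} is pushforward along $+n\colon \R\to\R$. Here I would argue that both sides are \emph{canonically} realized as restrictions along inclusions $i_n$ from $\Sh_{\widetilde\Lambda}(\R_x\times\R_t)$: Lemma \ref{shift as monodromy} gives the monodromy as $i_1^*\circ(i_0^*)^{-1}$, and on the B-side tensoring by $\mathcal{O}(n)$ is the shift of grading, which under \eqref{equivariant mirror a1} is literally translation of the half-line $(m,\infty)\mapsto(m+n,\infty)$ — a geometric pushforward along the translation, manifestly a functor. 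The naturality in $\nu$ (i.e. on morphisms) is then immediate because translation acts on the whole sheaf category, not just objects; coherence in $\eta$ follows because translations compose strictly, $(+m)\circ(+n) = (+(m+n))$, and the monodromy of a local system of categories is automatically a (weak) action. I expect the main obstacle to be purely bookkeeping: tracking that the chain of canonical equivalences — the descent of the $\R_t^n$-local system to $\R_t^n/M$, the reduction functors $\rho_0$ (whose compatibility is Lemma \ref{noneq monodromy from eq monodromy}, accounting for the sign $\rho_0^*\widetilde{\mathcal{M}} = -\mathcal{M}$), and the external-product decomposition — are compatible as $E_1$-structures and not merely on objects, so that the object-level matching of Proposition \ref{weak intertwining} genuinely enhances to an equivalence of $\Z^n/M$-equivariant categories. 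This is soft but requires care in the $\infty$-categorical framework.
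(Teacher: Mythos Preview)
Your reduction strategy has a genuine gap at the descent step.  You correctly observe that the diagram \eqref{commuting diagram for monodromy} commutes with the monodromies, so that on the level of functors you obtain
\[
\bigl(\widetilde{\mathcal{M}}_B(\eta)\circ \Xi_B\bigr)\circ f_{\mathrm{coh}} \;\cong\; \bigl(\Xi_B\circ \mathcal{P}_B(\eta)\bigr)\circ f_{\mathrm{coh}},
\]
where $f_{\mathrm{coh}}=\iota_Z^*\circ\pi_G^*\colon \Coh(\A^n/\G_m^n)\to\Coh((\A^n\setminus Z)/G)$.  But to cancel $f_{\mathrm{coh}}$ on the right you need it to be an epimorphism in $\mathrm{Pr}^L$, and it is not: while $\iota_Z^*$ is a localization, the de-equivariantization $\pi_G^*$ is pullback along a torsor for the torus $\G_m^n/G$, and such pullbacks are not epimorphisms (e.g.\ already $\Coh(B\G_m)\to\mathrm{Vect}$ fails, since $\mathrm{Vect}\otimes_{\Coh(B\G_m)}\mathrm{Vect}\cong\Coh(\G_m)\not\cong\mathrm{Vect}$).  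Equivalently on the sheaf side, $p_{M!}$ is pushforward along an infinite cover and is not a localization.  So ``an intertwining isomorphism of functors upstairs propagates downstairs'' is not purely bookkeeping: two colimit-preserving autoequivalences agreeing naturally on the image of a functor whose image merely generates need not agree as functors.  This is exactly why Proposition~\ref{weak intertwining} stops at an object-level statement.

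The paper's proof avoids this descent problem entirely by a structural argument: it shows that each $\Sh_{\Lambda_{Z,M,\gamma}}(M_\R/M)$ is a module for the convolution monoidal structure on $\Sh_{\Lambda_{Z,M,0}}(M_\R/M)$, so the monodromy $\mathcal{M}$ acts by module-automorphisms of the latter over itself.  For a symmetric monoidal category, any such automorphism is tensoring by an invertible object.  Transporting via the monoidal equivalence $\Xi_0\cong\Xi_{FLTZ}$, one concludes $\Xi_0^*\mathcal{M}(\eta)$ is tensoring by an invertible sheaf; Proposition~\ref{weak intertwining} then identifies it as $\mathcal{O}(\eta)$ (up to shift, which is excluded by checking one object).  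Your approach would become viable if you could supply an independent reason why $\Xi^{-1}\widetilde{\mathcal{M}}\,\Xi$ descends along $\pi_G^*$---for instance, by exhibiting it as $\G_m^n/G$-equivariant---but that is additional content, not bookkeeping.
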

\begin{proof}
    The remaining thing to show is that $\Xi^* \widetilde{\mathcal{M}}$ is given by tensor product with line bundles; having done so, which line bundles these are is determined by  Proposition \ref{weak intertwining}.  When $(\A^n \setminus Z) / G$ is a smooth proper toric variety, we may appeal (as do \cite{GHHPS}) to \cite{bondal2001reconstruction} to learn that in fact the only autoequivalences of this category are tensor by (possibly shifted) line bundles.  
    We give a proof for arbitrary smooth toric DM stacks in the next section. 
\end{proof}

\begin{remark}
    It would be interesting to compare our monodromies 
    with those studied in \cite{hanlon-monodromy} and \cite{Spenko-VanDenBergh}.  The comparison to the former could plausibly be done by adapting the skeletal calculations of \cite{Gammage-Shende-very-affine, Zhou-skel} to the situation of varying coefficients in the superpotential.
\end{remark}

\section{Monoidal considerations}

Note that any quotient of $\R^n$ by a subgroup  $M \subset \Z^n$ is naturally a group, with structure $$+ : \R^n/M \times \R^n / M \to \R^n/M.$$ Its category of sheaves carries a monoidal structure from convolution: 
\begin{eqnarray*}
    \star: \Sh(\R^n / M) \otimes \Sh(\R^n/M) & \to & \Sh(\R^n / M) \\
    F \boxtimes G & \mapsto & +_{!} (F \boxtimes G) 
\end{eqnarray*}
Recall that the skyscraper sheaf at the identity is the monoidal unit for convolution. 

Let us observe:

\begin{lemma} \label{O}
    Let $(\A^n \setminus Z)/G)$ be a  DM toric stack with proper coarse moduli space; i.e. assume the fan is complete.  
    Then any monoidal functor $(\Coh(\A^n \setminus Z)/G), \otimes) \to (\Sh_{\Lambda_{Z, M, 0}}(M_\R/M), \star)$ sends $\mathcal{O}$ to $\Z_0$, the skyscraper sheaf at $0$. 
\end{lemma}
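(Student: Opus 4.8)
The plan is to reduce the statement to the identification of monoidal units plus a single microlocal fact. Since the stack $(\A^n \setminus Z)/G$ is smooth, its category of (ind\nobreakdash-)coherent sheaves is symmetric monoidal under derived tensor product, with unit the structure sheaf $\mathcal{O}$; and, as recalled above, the skyscraper $\Z_0$ at the origin is the unit for convolution $\star$ on $\Sh(M_{\mathbb R}/M)$ (note that $M_{\mathbb R}/M$ is again a Euclidean space modulo a lattice, so that discussion applies verbatim). A strong monoidal functor carries the monoidal unit to the monoidal unit --- the coherence morphism out of the target unit is by definition an equivalence --- and the hypothesis here must be read as strong monoidality, else the assertion is false. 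Hence, once we know that $\Z_0$ is the monoidal unit of $(\Sh_{\Lambda_{Z,M,0}}(M_{\mathbb R}/M),\star)$, every monoidal functor as in the statement sends $\mathcal{O}$ to $\Z_0$.

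So the whole content is the claim $\Z_0 \in \Sh_{\Lambda_{Z,M,0}}(M_{\mathbb R}/M)$: given this, convolution with $\Z_0$ is the identity on all of $\Sh(M_{\mathbb R}/M)$, hence in particular preserves the full subcategory, so $\Z_0$ is a (two-sided) unit for $\star$ restricted there, and by uniqueness of monoidal units ($U \cong U \star U' \cong U'$) it is \emph{the} unit. Thus I would concentrate on showing that the microsupport $\mathrm{SS}(\Z_0) = T^*_0(M_{\mathbb R}/M)$, the full cotangent fibre over $0$, lies in $\Lambda_{Z,M,0}$; this is exactly where completeness of the fan enters.

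To see the containment I would unwind the definitions of \cite{shende-toric}. Recall $\Lambda_{Z,M,0}$ is the symplectic reduction of $\Lambda_{Z,M}\subset T^*(\R^n/M)$ along $M_{\mathbb R}/M\hookrightarrow \R^n/M$, and $\Lambda_{Z,M}$ is the descent of $\Lambda_Z\subset\Lambda^n\subset T^*\R^n$. The fibre of $\Lambda^n$ over the integer point $\vec 0\in\Z^n$ is the negative orthant $\{\vec\xi\le 0\}$; imposing the $Z$-condition that cuts out $\Lambda_Z$ and then applying the reduction projection $(\R^n)^*\to (M_{\mathbb R})^* =: N_{\mathbb R}$ produces precisely the union of the cones of the fan $\Sigma$ inside $N_{\mathbb R} = T^*_0(M_{\mathbb R}/M)$. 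When $\Sigma$ is complete this union is all of $N_{\mathbb R}$, which is the desired containment. Alternatively --- and this is probably the cleanest packaging for the paper --- one can short-circuit this by invoking the monoidality of \cite{FLTZ2} toric mirror symmetry: it gives that $\Xi_0$ is itself a monoidal functor into $(\Sh_{\Lambda_{Z,M,0}}(M_{\mathbb R}/M),\star)$, so that category is monoidal and $\Xi_0(\mathcal{O})$ is its unit, whence $\Xi_0(\mathcal{O})$ already lies in the category; its value $\Z_0$ then follows from the computation above (or from characterising the unit as the unique $\star$-invertible object supported at $0$).

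The main obstacle is the microlocal bookkeeping in the previous paragraph: carefully tracking the skeleton $\Lambda_{Z,M,0}$ through the quotient by $M$ and the symplectic reduction, and recognising that its fibre over $0$ is the support $|\Sigma|$ of the fan. This is the one step that genuinely requires the definitions of \cite{shende-toric}; everything else --- strong monoidal functors preserve units, uniqueness of monoidal units, the microsupport of a skyscraper --- is formal. A minor, essentially notational, point is to confirm that $(\Sh_{\Lambda_{Z,M,0}}(M_{\mathbb R}/M),\star)$ is genuinely monoidal, i.e. closed under $\star$; this is implicit in the statement and in any case follows from $\Xi_0$ being a monoidal equivalence.
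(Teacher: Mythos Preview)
Your proposal is correct and follows essentially the same route as the paper's proof: both reduce to checking that $\Z_0$ lies in $\Sh_{\Lambda_{Z,M,0}}(M_{\R}/M)$, and both verify this by identifying the fibre of $\Lambda_{Z,M,0}$ over $0$ with the support $|\Sigma|$ of the fan, which under the completeness hypothesis is all of the cotangent fibre. The paper states this last identification by reference to \cite{shende-toric} and \cite{FLTZ2}, while you unwind the symplectic reduction more explicitly; your ``alternative'' packaging via the monoidality of $\Xi_0$ is not really a different argument, since it still requires the same fibre-over-zero computation to pin down the unit as $\Z_0$.
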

\begin{proof}
    We must check that the skyscraper sheaf $\Z_0$ remains the monoidal unit after imposing the microsupport condition $\Lambda_{Z, M, 0}$.  This will certainly be true so long as it remains an element of the category, which is true so long as $\Lambda_{Z, M, 0}$ contains the conormal to $0$.  
    But the explicit description of $\Lambda_{Z, M, 0}$ in \cite{shende-toric} (i.e. the identification of it with the microsupport condition identified long earlier by \cite{FLTZ2}) shows that the fiber of $\Lambda$ over $0$ is just the fan itself.   
\end{proof}

\begin{remark}
    That $\Phi_{FLTZ}(\mathcal{O}) = \Z_0$ is not immediately obvious from 
    the definition of the \cite{FLTZ2} functor, nor from our later construction \cite{shende-toric}.  In fact, for arbitrary (not proper) toric varieties, I do not know an explicit characterization. 
\end{remark}

The mirror functor $\Xi_{FLTZ}$ constructed in \cite{FLTZ2} was shown there to be monoidal, hence Lemma \ref{O} can be applied to $\Xi_{FLTZ}$.  
We however are going to want to know the conclusion of Lemma \ref{O} compatibly with the conclusion of Proposition \ref{weak intertwining}.  Thus we should either verify $\Xi_{FLTZ} \cong \Xi_0$, or show directly that $\Xi_0$ is monoidal.  We do the first:

\begin{proposition}
    $\Xi_0 \cong \Xi_{FLTZ}$ for smooth toric DM stacks. 
\end{proposition}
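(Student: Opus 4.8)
\textbf{Proof proposal for Proposition (``$\Xi_0 \cong \Xi_{FLTZ}$ for smooth toric DM stacks'').}

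The plan is to exhibit both functors as the unique (up to equivalence) functor with certain normalizing properties, and then match these properties. First I would recall that both $\Xi_0$ and $\Xi_{FLTZ}$ are equivalences $\Coh((\A^n \setminus Z)/G) \xrightarrow{\sim} \Sh_{\Lambda_{Z,M,0}}(M_\R/M)$; this is the content of \eqref{equivariant mirror symmetry} composed with $\rho_0$ on the one hand, and the theorem of \cite{FLTZ2} on the other. Since the source category $\Coh((\A^n\setminus Z)/G)$ is generated by the line bundles $\mathcal{O}(\eta)$, $\eta \in \Z^n/M$ (as used already in the corollary to Theorem \ref{fiber enough lagrangians}, via \cite{Favero-Huang, Hanlon-Hicks-Lazarev}), it suffices to produce a natural isomorphism of the two functors on this generating set, compatibly with morphisms — equivalently, to show the two functors agree after restriction to the full subcategory on the $\mathcal{O}(\eta)$, as $A_\infty$ (or dg) categories.

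The key steps, in order: (1) reduce to $\A^1/\G_m$ and products thereof, exactly as in the proof of Proposition \ref{weak intertwining}: both $\Xi_0$ and $\Xi_{FLTZ}$ are built (the former by \cite{shende-toric}, the latter by \cite{FLTZ2}) by formal manipulations — products, the pullback $\pi_G^*$, and the stop-removal/restriction functor $\iota_Z^*$ — out of the case of $\A^1/\G_m$, where the equivalence \eqref{equivariant mirror a1} is pinned down on the nose by \eqref{fltz simplest coh} and \eqref{fltz simplest sh}; so it is enough to check $\Xi_0 \cong \Xi_{FLTZ}$ for $\A^1/\G_m$ and to check that both constructions respect these building operations in a compatible way. (2) For $\A^1/\G_m$: both functors send $\mathcal{O}(n) \mapsto \Z_{(n,\infty)}$ and induce the same map on Hom-spaces, so they agree on the generating subcategory and hence are equivalent. (3) Assemble: use Lemma \ref{a1 monodromy intertwining} and the functoriality square \eqref{commuting diagram for monodromy} to propagate the $\A^1/\G_m$ identification through products, $\pi_G^*$, and $\iota_Z^*$, noting that the same diagrams hold for the \cite{FLTZ2} construction (indeed \cite{shende-toric} was designed to recover it), and then apply $\rho_\gamma$ at $\gamma = 0$, which is an equivalence precisely in the simplicial (DM) case. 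The simpliciality hypothesis enters only here, guaranteeing $\rho_0$ is defined and an equivalence, so that $\Xi_0$ makes sense on the nose.

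The main obstacle I expect is step (3): making the comparison of the two a priori differently-organized constructions precise at the level of $\infty$-categorical functoriality rather than just on objects. Concretely, one must check that the restriction functor $\rho_0$ and the stop-removal $\iota_Z^*$ intertwine \cite{shende-toric}'s reduction with \cite{FLTZ2}'s, i.e. that the identifications of microsupport conditions $\Lambda_{Z,M,0}$ with the FLTZ conormal Lagrangian are not merely set-theoretic but compatible with the functors — this is essentially the content of the parenthetical remark in the proof of Lemma \ref{O} (``the identification of it with the microsupport condition identified long earlier by \cite{FLTZ2}''). Given that identification, one is reduced to uniqueness of the toric mirror equivalence with a prescribed action on the generating line bundles, which since the category is generated by an exceptional-type collection with the Hom-spaces computed in \eqref{fltz simplest coh}–\eqref{fltz simplest sh} forces the functor up to equivalence. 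An alternative, perhaps cleaner, route: invoke Theorem \ref{strong intertwining} together with the monoidality of $\Xi_{FLTZ}$ and Lemma \ref{O} to see that $\Xi_{FLTZ}^{-1} \circ \Xi_0$ is an autoequivalence of $\Coh((\A^n\setminus Z)/G)$ fixing $\mathcal{O}$ and commuting with $\mathcal{P}$; since $\mathcal{P}$ acts transitively on line bundles and these generate, such an autoequivalence is trivial — though this requires first establishing Theorem \ref{strong intertwining} for DM stacks, which the paper defers to this very section, so some care with the logical order is needed.
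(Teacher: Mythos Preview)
Your main strategy has a gap: you assert that $\Xi_{FLTZ}$ is built, like $\Xi$, from the $\A^1/\G_m$ case by the operations in diagram \eqref{commuting diagram for monodromy} (products, $\pi_G^*$, $\iota_Z^*$). This is not how $\Xi_{FLTZ}$ is defined. The functor of \cite{FLTZ2, FLTZ3} is specified via explicit `shard sheaves' on $M_\R/M$, i.e.\ by its behavior under pushforward along open affine toric charts $\A^k/G' \hookrightarrow T$ (with $G'$ finite); it does not pass through the global Cox presentation $(\A^n\setminus Z)/G$ or through $\A^n/\G_m^n$. So the parenthetical ``the same diagrams hold for the \cite{FLTZ2} construction'' is exactly the thing to be proven, not something one may simply note.

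The paper argues differently. It first checks the base cases $\A^n$ and $\A^n/G'$ with $G'$ finite, where $\rho_0$ is the identity and $\Xi_0 = \Xi$ agrees with $\Xi_{FLTZ}$ essentially by definition. Then it observes that $\Xi_{FLTZ}$ for general $T$ is \emph{characterized} by intertwining, for every open toric inclusion $\iota: \A^k/G' \hookrightarrow T$, the coherent pullback $\iota^*$ with stop removal on the constructible side. It remains to verify that $\Xi_0$ has this same property. For $\Xi$ (before $\rho_0$), this is the natural generalization of the bottom trapezoid of \eqref{commuting diagram for monodromy}, with $Z' \supset Z$ cutting out the chart. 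The passage to $\Xi_0 = \rho_0 \circ \Xi$ then rests on one key technical input: the non-characteristicity result of \cite{shende-toric}, which says that stop removal commutes with the fiber restriction $\rho_0$. Your obstacle paragraph hovers near this, but frames it as a question of matching microsupport \emph{sets}; the real issue is whether $\rho_0$ and stop removal commute as \emph{functors}, and for that one needs the non-characteristic estimate.

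Your alternative route is, as you correctly flag, circular: the paper deduces Theorem \ref{strong intertwining} for DM stacks \emph{from} this proposition.
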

\begin{proof}
    The mirror functor $\Xi_{FLTZ}$ of \cite{FLTZ2} agrees with $\Xi = \Xi_0$ for $\A^n$, essentially by definition of the latter.  It is straightforward to see that, for $\A^N/G$ with $G \subset \G_m^n$ finite, the stacky prescription in \cite{FLTZ3} is related to that in \cite{FLTZ2} by de/equivariantization, hence also matches the corresponding prescription of $\Xi = \Xi_0$. 

    In general, the map $\Xi_{FLTZ}$ is characterized by the following property: given a toric inclusion $\A^n / G \to T$, the map $\Xi_{FLTZ}$ intertwines the pushforward of sheaves on both sides; or, adjointly, it intertwines the pullback of coherent sheaves with stop removal of the extraneous parts of the microsupport condition.  (Indeed, the `shard sheaves' in terms of which $\Xi_{FLTZ}$ is defined in \cite{FLTZ2, FLTZ3} are pushed forward from the mirror to some $\A^n/G$.) 

    Thus we need to verify that  $\Xi_0$ also has this behavior for toric inclusions of $\A^n/G$. 
    Here note that the affine toric charts of $(\A^n \setminus Z) / G$ are necessarily of the form $(\A^n \setminus Z')/ G$ for some larger collection of coordinate hyperplanes $Z' \supset Z$.  
    Now the desired commutativity for the functor $\Xi$ is the natural generalization of the bottom trapezoid of \eqref{commuting diagram for monodromy}.  We may pass it to $\Xi_0$ since, by the key non-characteristicity result established in \cite{shende-toric}, we may remove the stops equivalently before or after the restriction $\rho_0$ to the fiber. 
\end{proof}

\begin{proof}[Proof of Theorem \ref{strong intertwining} for smooth toric DM stacks]
    The same estimate showing that convolution gives a monoidal structure on
    $\Sh_{\Lambda_{Z, M, 0}}(M_\R/M)$ similarly shows that 
    $\Sh_{\Lambda_{Z, M, \gamma}}(M_\R/M)$ is a module for said monoidal structure. 
    Thus the monodromy $\mathcal{M}$ in fact acts by endomorphisms of  
    $\Sh_{\Lambda_{Z, M, 0}}(M_\R/M)$ as a $(\Sh_{\Lambda_{Z, M, 0}}(M_\R/M), \star)$-module.  For any symmetric monoidal category, such endomorphisms are necessarily
    the product with an invertible object. 

    Since $\Xi_0 \cong \Xi_{FLTZ}$ is monoidal, we see that $\Xi_0^* \mathcal{M}$ is given by tensor products with invertible objects, hence (possibly shifted) line bundles. 
\end{proof}
 
\section{The sheaf-Fukaya comparison and compact Lagrangians}

Let $W$ be a Weinstein manifold, 
and $\mathfrak{c}(W)$ its core.  
Recall that in \cite{shende-microlocal, Nadler-Shende}, we defined, for any compact conic subset $K$ of $W$, a category $\mu sh(K)$.  These categories grow under inclusion: for $K \subset K'$ one has $\mu sh(K) \hookrightarrow \mu sh(K')$. 

In \cite{GPS3} we showed that if $\mathfrak{c}(W)$ is the core of $W$, then
\begin{equation} \label{gps equivalence} \Fuk(W) \cong \mu sh(\mathfrak{c}(W)),
\end{equation}
and that moreover if $p \in \mathfrak{c}(W)$ is a smooth Lagrangian point,
and $D_p \subset W$ is an eventually conic closed Lagrangian disk transverse to $p$, 
then $D_p$ is sent to a microstalk functor at $p$. However, we did not discuss how the equivalence \eqref{gps equivalence}  acted on other objects.  For our purposes now, we will need to understand its action on compact Lagrangians.

Recall that for any (singular) conic Lagrangian 
$\Lambda$, the category $\mu sh(\Lambda)$ is generated by microstalk functors at smooth points of $\Lambda$.  In particular, $\mu sh(\Lambda)^{pp}$ is the subcategory of objects which have perfect microstalks.  
For cotangent bundles, we know  that the cotangent fiber generates and its endomorphisms are the chains on the based loop space of the base manifold (originally from \cite{Abouzaid-cotangent}; or, logically independently, from either \cite{GPS2} or \cite{GPS3}).  It follows that all of $\Fuk(T^*M)^{pp}$ is realized by perfect local systems on $M$.  Thus  the equivalence
\eqref{gps equivalence} carries  
$\mu sh(\mathfrak{c}(T^*M) = M)^{pp}$ to objects arising from perfect local systems on $M \subset T^*M$.   

Here we want to know the following generalization: 

\begin{proposition} \label{annoying obvious fact}
    Let $L \subset \mathfrak{c}(W)$ be a smooth compact Lagrangian. 
    Then the equivalence of \cite{GPS3} identifies $\mu sh(L)^{pp}$ with the subcategory of $\Fuk(W)$ on objects given by a finite rank local system on $L$.  
    
    More generally, for any smooth compact $L\subset W$ and its Legendrian lift $\widetilde{L} \subset W \times \R$, the composition of \cite{GPS3} with the 
fully faithful microlocal nearby cycle functor of \cite{Nadler-Shende} identifies 
    $\mu sh (\widetilde{L})^{pp}$ with  the subcategory of $\Fuk(W)$ on objects given by a finite rank local system on $L$.  
\end{proposition}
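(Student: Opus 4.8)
The plan is to reduce the general statement to the cotangent-bundle case that is already recalled in the excerpt, using a Weinstein neighborhood of $L$ together with the known functoriality of $\mu sh$ under inclusion of conic subsets. First I would fix a Weinstein neighborhood $U \cong T^*L$ of $L$ inside $W$ (shrinking so that $\mathfrak c(U) = L$); the inclusion $L \subset \mathfrak c(W)$ and the growth property $\mu sh(L) \hookrightarrow \mu sh(\mathfrak c(W))$ give a fully faithful embedding whose image is generated by microstalk functors at smooth points of $L$. Since every point of $L$ is a smooth Lagrangian point of $\mathfrak c(W)$, the \cite{GPS3} equivalence sends each such microstalk functor to the Floer cohomology with an eventually conic transverse disk $D_p$, i.e. to a geometric microstalk in $\Fuk(W)$. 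The content to extract is that the subcategory of $\Fuk(W)$ generated by these disk-objects, restricted to its pseudo-perfect part, is exactly the local systems supported on $L$.

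The key computation is then local and is precisely the cotangent-bundle statement already invoked: inside $T^*L$, the cotangent fibers generate, their endomorphism algebra is chains on $\Omega L$, and so $\Fuk(T^*L)^{pp}$ consists of finite-rank local systems on $L$ sitting as the zero section. I would phrase this as: the fully faithful localization-to-a-Weinstein-subdomain functor (Viterbo restriction, or the corestriction built from stop-removal in \cite{GPS2, GPS3}) identifies the subcategory of $\Fuk(W)$ microsupported on $L$ with $\Fuk(T^*L)$ on its pseudo-perfect part, compatibly with the $\mu sh$ description. Combined with the first paragraph, $\mu sh(L)^{pp}$ maps isomorphically onto the finite-rank local systems on $L \subset W$. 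One must check that pseudo-perfectness is preserved in both directions: an object of $\mu sh(L)$ is pseudo-perfect iff all its microstalks are perfect, and under the equivalence this matches the condition that $\Hom(T,-)$ is finite rank against all (possibly noncompact) test objects $T$; the compact Lagrangian $L$ has finite-rank $\Hom$ from anything by the remark in the introduction, so the local systems are indeed pseudo-perfect, and conversely perfect microstalks force finite rank.

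For the ``more generally'' clause I would invoke the microlocal nearby cycle functor of \cite{Nadler-Shende}, which is fully faithful and, on a Legendrian lift $\widetilde L \subset W \times \R$ of a smooth compact (not necessarily exact, not necessarily Lagrangian) $L \subset W$, realizes $\mu sh(\widetilde L)$ as sheaves on $L$ microsupported appropriately; the key point is that for a \emph{smooth} $L$ this recovers, after the nearby-cycle degeneration, the same picture as above — the Legendrian lift of a smooth submanifold has microlocal category the (shifted) local systems on $L$. Concretely I would factor the claimed identification as $\mu sh(\widetilde L)^{pp} \to \mu sh(\mathfrak c(W \times \R))^{pp} \to \Fuk(W)$, the first map being inclusion-of-conics plus the nearby-cycle comparison, the second being \cite{GPS3}, and then observe the composite lands in and surjects onto local systems on $L$ by the same Weinstein-neighborhood argument applied in $W \times \R$ (or directly in $W$, since the nearby cycle functor intertwines the two).

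The main obstacle, I expect, is the bookkeeping that makes ``microsupported on $L$'' behave well under the three functors involved — the $\mu sh$ inclusion, the \cite{GPS3} equivalence, and the Weinstein-neighborhood / Viterbo restriction — all \emph{simultaneously} compatibly with the microstalk generators and with the pseudo-perfect truncations. In particular one needs that the disk $D_p$ transverse to $p \in L$ can be taken to lie in the Weinstein neighborhood $U \cong T^*L$ and there becomes (isotopic to) a cotangent fiber, so that the microstalk-functor-to-disk identification of \cite{GPS3} is literally the one computed in $T^*L$; and one needs that no extra pseudo-perfect objects of $\Fuk(W)$ (not coming from $L$) are microsupported on $L$, which is exactly the fully faithfulness of the restriction functor. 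None of these steps is new, but assembling them is where the care is required; the actual Floer-theoretic input is entirely the cotangent-bundle computation quoted from \cite{Abouzaid-cotangent, GPS2, GPS3}.
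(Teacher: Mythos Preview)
Your outline is the same as the paper's: take a Weinstein neighborhood $V=D^*L$, reduce to the cotangent case via Viterbo/Sylvan restriction, and invoke the known description of $\Fuk(T^*L)^{pp}$ as local systems.  You also correctly locate the real work: one must show that the right adjoint to Sylvan--Viterbo on the Fukaya side matches the $\mu sh$-inclusion $\mu sh(L)\hookrightarrow\mu sh(\mathfrak c(W))$ under the \cite{GPS3} equivalence.  What is missing from your proposal is an actual argument for this compatibility.  Your suggested route --- choose the transverse disk $D_p$ inside the neighborhood so that it ``is'' a cotangent fiber --- does not by itself give the commuting square: knowing where generators land does not pin down the functor, and the wrapped endomorphisms of $D_p$ in $W$ are not those in $T^*L$, so one cannot simply read off the adjoint from this data.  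The paper instead proves the compatibility (its Theorem~\ref{viterbo commutes with GPS}) by passing through the Sylvan mapping cylinder $[W\to V]=(W\times\C,\,W\times{-\infty}\sqcup V\times{+\infty})$: there is an evident commuting diagram with $\Fuk([W\to V])$ and $\mu sh(\mathfrak c([W\to V]))$ in the middle, and the point is that the microsheaf restriction $\mu sh(\mathfrak c([W\to V]))\to\mu sh(\mathfrak c(V))$ admits a section coming from the closed inclusion $\mathfrak c(V)\times\R\subset\mathfrak c([W\to V])$ together with the stabilization isomorphism $\mu sh(\mathfrak c(V))\cong\mu sh(\mathfrak c(V)\times\R)$; composing with the map to $\mu sh(\mathfrak c(W))$ recovers the $\mu sh$-inclusion (and in the general case the microlocal nearby cycle of \cite{Nadler-Shende}).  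Once that square commutes, your Lemma --- that the adjoint to Viterbo carries a compact Lagrangian in $V$ to ``the same'' Lagrangian in $W$ --- is exactly the paper's Lemma~\ref{viterbo versus inclusion}, and the proof finishes as you say.
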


The remainder of this section serves to prove this proposition.  For the results elsewhere in this article, we need only the case of $L \subset \mathfrak{c}(W)$.  

\vspace{2mm}
As the equivalence of \cite{GPS3} does not speak directly of compact Lagrangians, to prove the proposition it will be convenient for the proposition to be able to express ``objects given by a finite rank local system on $L$'' without speaking of objects. 

To do this we recall from \cite{GPS2} Sylvan's construction of a `Viterbo restriction' functor.  Let $W$ be a Liouville, and $V \subset W$ a Liouville subdomain.  Consider the stopped Liouville manifold 
$$[W \to V] := (W \times \C, W \times -\infty \sqcup V \times \infty)$$  The covariant inclusion functors from \cite{GPS1} give maps 
$$ \Fuk(W) \rightarrow \Fuk([W \to V]) \hookleftarrow \Fuk(V)$$
where the full faithfulness of the right map follows from geometric considerations of stopping of wrapping.  When $V$ is Weinstein, this map is moreover an equivalence, and thus we obtain a map 
$\Fuk(W) \to \Fuk(V)$.  Here we merely observe: 

\begin{lemma} \label{viterbo versus inclusion}
    The pullback $\Fuk(V)^{pp} \to \Fuk(W)^{pp}$ along the Sylvan-Viterbo restriction map sends a compact Lagrangian (with local system etc.) to ``the same'' compact Lagrangian. 
\end{lemma}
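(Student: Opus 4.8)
The plan is to unwind the definitions carefully and chase the compact Lagrangian through the two covariant inclusion functors that constitute the Sylvan--Viterbo restriction. First I would recall that for a compact exact Lagrangian $K \subset W$ (with its local system), the covariant inclusion functor $\Fuk(W) \to \Fuk([W \to V])$ of \cite{GPS1} is defined geometrically: it sends $K$ to the image of $K$ under the inclusion $W \hookrightarrow W \times \C$ as $W \times \{\mathrm{pt}\}$, suitably positioned away from the stops. Since $K$ is compact, it is disjoint from both stops $W \times \{-\infty\}$ and $V \times \{+\infty\}$, so no wrapping is needed and the object really is represented by the ``same'' Lagrangian $K \times \{\mathrm{pt}\}$ sitting in $[W \to V]$.

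Next I would argue that when $K$ is moreover contained in $V$ (the case that matters, since we are pulling back an object of $\Fuk(V)^{pp}$ which is a compact Lagrangian inside $V$), the covariant inclusion $\Fuk(V) \to \Fuk([W \to V])$ likewise sends $K$ to $K \times \{\mathrm{pt}\}$, again with no wrapping because $K$ is compact and disjoint from the stops. Thus both inclusion functors carry ``$K$'' to literally the same object of $\Fuk([W \to V])$. Since the right-hand functor $\Fuk(V) \hookrightarrow \Fuk([W \to V])$ is fully faithful (and an equivalence when $V$ is Weinstein), the Sylvan--Viterbo restriction $\Fuk(W) \to \Fuk(V)$, defined as the left functor followed by the inverse of the right one, sends ``$K$ in $W$'' to ``$K$ in $V$''. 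Passing to pseudo-perfect subcategories is harmless since compact Lagrangians with finite-rank local systems are pseudo-perfect on both sides, and the restriction functor preserves this by Lemma~\ref{viterbo versus inclusion}'s own setup (or directly: $\Hom(T,K)$ being finite rank is inherited).

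The main obstacle is making precise that the covariant inclusion functor of \cite{GPS1} really does send a compact Lagrangian to its ``naive'' image, i.e.\ that the Hamiltonian perturbations / positive isotopies built into the definition of that functor can be taken to fix a neighborhood of a compact Lagrangian lying in the interior away from the stops. This is a standard but slightly fussy point: one needs to know that the wrapping used to define the functor is supported near the stop and can be chosen to be the identity on the compact core region containing $K$. I would handle this by citing the explicit construction in \cite{GPS1} (the inclusion functor is induced by viewing the smaller Liouville sector inside the larger one and the continuation maps are identity on objects supported away from where the stops differ), and noting that $K \times \{\mathrm{pt}\}$, being compact, is such an object. Everything else is formal diagram-chasing through the definition of the Sylvan--Viterbo map.
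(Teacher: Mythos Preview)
Your approach is exactly the paper's: reduce to showing that the two covariant inclusion functors $\Fuk(W) \to \Fuk([W \to V]) \leftarrow \Fuk(V)$ send the compact Lagrangian $L \subset V \subset W$ to the same object, then invoke the definition of the Sylvan--Viterbo map. The paper's proof is a two-line version of what you wrote.

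One correction: the image of $K$ under the covariant inclusion is not $K \times \{\mathrm{pt}\}$, which is not even Lagrangian in $W \times \C$ for dimension reasons. The paper writes the image as (an object isotopic to) $L \times i\R$ --- a product with a real line in the $\C$ factor. With that fix, your description of why the two images agree (both are ``the same'' $L$ crossed with the same arc, sitting away from both stops since $L$ is compact and contained in $V$) is correct and matches the paper's ``immediate from the definition of the covariant inclusion.''
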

\begin{proof}
    Fix such a Lagrangian (and associated data) $L \subset V$.  Let $L_V \in \Fuk(V)$ and $L_W \in \Fuk(W)$ be the associated objects.  To prove the proposition, it suffices to show that the images of these two objects are isomorphic in $\Fuk([W \to V])$.  But this is immediate from the definition of the covariant inclusion, as both are (isotopic to) the same object on $L \times i\R$. 
\end{proof}

Thus we see that ``objects given by a finite rank local system on $L$''  are characterized as the image of 
$\Fuk(T^*L)^{pp} \to \Fuk(W)^{pp}$
where the map is the pullback along Sylvan-Viterbo restriction. 

\begin{theorem} \label{viterbo commutes with GPS}
    Consider an inclusion of Weinstein domains $V \subset W$ such that $\mathfrak{c}(V) \subset \mathfrak{c}(W)$.  Then the following diagram commutes, where the top arrow is adjoint to Sylvan-Viterbo (recall we ind-complete all our categories) and the bottom is inclusion of microsheaves. 
\[\begin{tikzcd}
	{\mathrm{Fuk}(W)} & {\mathrm{Fuk}(V)} \\
	{\mu sh(\mathfrak{c}(W))} & {\mu sh(\mathfrak{c}(V))}
	\arrow[Rightarrow, no head, from=1-1, to=2-1]
	\arrow[from=1-2, to=1-1]
	\arrow[Rightarrow, no head, from=1-2, to=2-2]
	\arrow[from=2-2, to=2-1]
\end{tikzcd}\]
    More generally, without assuming $\mathfrak{c}(V) \subset \mathfrak{c}(W)$, the same is true with the bottom arrow being given by the `microlocal nearby cycle' functor of \cite{Nadler-Shende}. 
\end{theorem}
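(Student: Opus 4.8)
The plan is to reduce the statement to the defining property of the \cite{GPS3} equivalence, namely its compatibility with microlocalization and with stop removal/pushforward, and then to handle the commutativity by a doubling trick on the stopped manifold $[W\to V]$. First I would recall that both vertical arrows are the \cite{GPS3} equivalences \eqref{gps equivalence} for $W$ and for $V$ respectively, so the content of the theorem is purely about how the Sylvan--Viterbo restriction (equivalently, after ind-completion, its left adjoint $\Fuk(V)\to\Fuk(W)$, which is the covariant inclusion composed with the inverse of the equivalence $\Fuk(V)\xrightarrow{\sim}\Fuk([W\to V])$) corresponds on the microsheaf side to the inclusion-of-microsheaves functor $\mu sh(\mathfrak{c}(V))\to\mu sh(\mathfrak{c}(W))$ induced by $\mathfrak{c}(V)\subset\mathfrak{c}(W)$. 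The strategy is to realize both of these as the effect of a single geometric operation on $[W\to V]$ and its core.

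The key steps, in order, are as follows. (1) Apply the \cite{GPS3} equivalence to the stopped Weinstein manifold $[W\to V]=(W\times\C,\, W\times\{-\infty\}\sqcup V\times\{+\infty\})$: its core is obtained from $\mathfrak{c}(W)$ and $\mathfrak{c}(V)$ by a standard mapping-cylinder/handle picture, and the three covariant inclusion functors $\Fuk(W)\to\Fuk([W\to V])\hookleftarrow\Fuk(V)$ are identified by \cite{GPS3} (together with the functoriality of microsheaves under inclusion of conic subsets, and under stop removal, established in \cite{Nadler-Shende, GPS2}) with the corresponding microsheaf inclusion/corestriction functors attached to the inclusions of the relevant pieces of the cores. (2) Use that $V$ is Weinstein to invoke the equivalence $\Fuk(V)\xrightarrow{\sim}\Fuk([W\to V])$ of \cite{GPS2}; on the microsheaf side this is the statement that the core piece coming from $V$ already carries all of $\mu sh(\mathfrak{c}([W\to V]))$, which is again a stop-removal statement. (3) Compose: the Sylvan--Viterbo restriction $\Fuk(W)\to\Fuk(V)$ is by definition $\Fuk(W)\to\Fuk([W\to V])\xrightarrow{\sim}\Fuk(V)$, and chasing the identifications from (1)--(2) through the diagram shows its adjoint matches the microsheaf inclusion $\mu sh(\mathfrak{c}(W))\to\mu sh(\mathfrak{c}(V))$ — here one uses the hypothesis $\mathfrak{c}(V)\subset\mathfrak{c}(W)$ to make literal sense of that inclusion. (4) For the general statement, drop the hypothesis $\mathfrak{c}(V)\subset\mathfrak{c}(W)$: now $\mathfrak{c}(V)$ need only sit inside $W$, and the bottom arrow is replaced by the microlocal nearby cycle functor of \cite{Nadler-Shende}; one repeats the argument after perturbing $V$ (or rather its core) to be eventually conic, observing that the nearby cycle functor is precisely what interpolates between $\mu sh(\mathfrak{c}(V))$ computed intrinsically and its image under a small pushoff into $W$, and that this interpolation is compatible with the $[W\to V]$ picture.

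The main obstacle I expect is step (1): making precise the claim that the \cite{GPS3} equivalence, applied to $[W\to V]$, intertwines the \emph{three} covariant inclusion functors of \cite{GPS1} with the three evident microsheaf functors, and doing so \emph{coherently} (i.e. as a commuting square of functors, not merely on objects). This requires knowing that the \cite{GPS3} equivalence is natural with respect to stop removal and with respect to Weinstein sub-inclusions — a compatibility that is morally built into the microlocal sheaf formalism but needs to be cited carefully from \cite{GPS2, GPS3, Nadler-Shende}, and is exactly the kind of naturality statement that is delicate at the $\infty$-categorical level. The passage to the nearby-cycle version in step (4) is a secondary difficulty, but it is essentially the content of \cite{Nadler-Shende} and should go through once (1)--(3) are in place.
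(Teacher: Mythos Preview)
Your approach is essentially the paper's: pass through $[W\to V]$, apply the \cite{GPS3} equivalence to all three columns, and then chase. The paper takes the commutativity of the two squares (your step (1)) as a direct consequence of \cite{GPS3}, so your worry there is less serious than you fear. The substantive content, which you gesture at with ``mapping-cylinder/handle picture'' but do not make explicit, is the following: after the big diagram, one still has to identify the inverse of $\mu sh(\mathfrak{c}([W\to V]))\xrightarrow{\sim}\mu sh(\mathfrak{c}(V))$ and check that its composite with $\mu sh(\mathfrak{c}([W\to V]))\to\mu sh(\mathfrak{c}(W))$ is the inclusion induced by $\mathfrak{c}(V)\subset\mathfrak{c}(W)$. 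The paper does this by observing that $\mathfrak{c}(V)\times\R$ sits as a closed subset of $\mathfrak{c}([W\to V])$, so the stabilization isomorphism $\mu sh(\mathfrak{c}(V))\cong\mu sh(\mathfrak{c}(V)\times\R)$ gives the desired section, and the further map to $\mu sh(\mathfrak{c}(W))$ is then visibly the inclusion. In the general case the paper replaces this by the relative microlocal nearby cycle of \cite{Nadler-Shende}, embedding the bottom row fully faithfully into $\mu sh(\mathfrak{c}(W))\xleftarrow{\sim}\mu sh(\mathfrak{c}(W)\times\R)\xrightarrow{\sim}\mu sh(\mathfrak{c}(W))$. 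Your step (3) (``chasing the identifications'') is exactly this, but you should write down the section via $\mathfrak{c}(V)\times\R$ rather than leave it implicit; also note your arrow $\mu sh(\mathfrak{c}(W))\to\mu sh(\mathfrak{c}(V))$ in step (3) is backwards.
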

\begin{proof}
    The following diagram commutes, 
    where all horizontal maps are either the \cite{GPS1} covariant inclusion functor or the restriction of microsheaves: 
\[\begin{tikzcd}
	{\mathrm{Fuk}(W)} & {\mathrm{Fuk}([W \to V])} & {\mathrm{Fuk}(V)} \\
	{\mu sh(\mathfrak{c}(W))} & {\mu sh(\mathfrak{c}([W \to V]))} & {\mu sh(\mathfrak{c}(V))}
	\arrow[Rightarrow, no head, from=1-1, to=2-1]
	\arrow[from=1-2, to=1-1]
	\arrow["\sim", from=1-2, to=1-3]
	\arrow[Rightarrow, no head, from=1-2, to=2-2]
	\arrow[Rightarrow, no head, from=1-3, to=2-3]
	\arrow[from=2-2, to=2-1]
	\arrow[from=2-2, to=2-3]
\end{tikzcd}\]
Thus it suffices to show that the map 
$\mu sh(\mathfrak{c}([W \to V])) \to \mu sh(\mathfrak{c}(V))$ admits a section
so that the further composition 
$\mu sh(\mathfrak{c}(V)) \to \mu sh(\mathfrak{c}([W \to V]))  \to \mu sh(\mathfrak{c}(W))$ agrees with the map  $\mathfrak{c}(V) \to \mathfrak{c}(W)$.
When $\mathfrak{c}(V) \subset \mathfrak{c}(W)$, 
such a section is given by the closed embedding of $\mathfrak{c}(V) \times \R \subset \mathfrak{c}([W \to V])$, along with the stabilization isomorphism
$\mu sh(\mathfrak{c}(V)) \cong \mu sh(\mathfrak{c}(V) \times \R)$.

More generally, we may compose with the microlocal nearby cycle -- here we need the relative version \cite[Cor. 9.18]{Nadler-Shende} -- to get a fully faithful embedding of the bottom row into 
$$\mu sh(\mathfrak{c}(W)) \xleftarrow{\sim} \mu sh(\mathfrak{c}(W) \times \R) \xrightarrow{\sim} \mu sh(\mathfrak{c}(W)).$$  Composition with these equivalences again provides the desired section. 
\end{proof}

\begin{proof}[Proof of Proposition \ref{annoying obvious fact}]
Take $V = D^*L$ a disk bundle around $L$, and apply Lemma \ref{viterbo commutes with GPS}, restricted to pseudoperfect objects.  Thus we have: 
\[\begin{tikzcd}
	{\mathrm{Fuk}(W)^{pp}} & {\mathrm{Fuk}(T^*L)^{pp}} \\
	{\mu sh(\mathfrak{c}(W))^{pp}} & {\mu sh(L)^{pp}}
	\arrow[Rightarrow, no head, from=1-1, to=2-1]
	\arrow[from=1-2, to=1-1]
	\arrow[Rightarrow, no head, from=1-2, to=2-2]
	\arrow[from=2-2, to=2-1]
\end{tikzcd}\]
Now Lemma \ref{viterbo versus inclusion} reduces the result to the special case of cotangent bundles, which we have already seen to hold.  
\end{proof}

\begin{remark}
    Proposition \ref{annoying obvious fact} can be deduced similarly but with somewhat less fuss from \cite[Prop. 11.2]{GPS3} in the case that the cobordism $W \setminus D^*L$
    is also Weinstein.  This is presumably the case in our applications, but is not immediately obviously so, because our $L$ is not a minimum of the  Morse function. 
\end{remark}

\section{Proof of Theorem \ref{fiber enough lagrangians}}
Let $\mathbf{T}$ be a smooth toric stack, $\partial \mathbf{T}$ its toric boundary, $\Lambda_{\mathbf{T}} \subset T^*(M_\R / M)$ the corresponding FLTZ skeleton, and $\partial_\infty \Lambda_{\mathbf{T}} \subset S^* (M_\R / M)$ its ideal Legendrian boundary.  Recall from \cite{Gammage-Shende-very-affine, Zhou-skel} that the Hori-Vafa Landau-Ginzburg model mirror to $\mathbf{T}$ carries the structure of a Weinstein pair $((\C^*)^n, F_{\mathbf{\partial T}})$ with $(\Lambda_{\mathbf{T}}, \partial_\infty \Lambda_{\mathbf{T}} )$ as its relative skeleton, and from \cite{Gammage-Shende-very-affine} that there is a commuting diagram 
\begin{equation} \label{GS1 diagram}
\begin{tikzcd}
	{\mathrm{Fuk}((\mathbb{C}^*)^n, F)} & {\mathrm{Sh}_{\Lambda_{\mathbf{T}}}(M_{\mathbb{R}}/M)} & {\mathrm{Coh}(\mathbf{T})} \\
	{\mathrm{Fuk}(F)} & {\mu sh(\partial_\infty \Lambda_{\mathbf{T}})} & {\mathrm{Coh}(\partial \mathbf{T})}
	\arrow["\shortmid"{marking}, from=1-1, to=2-1]
	\arrow[Rightarrow, no head, from=1-2, to=1-1]
	\arrow["\mu"', from=1-2, to=2-2]
	\arrow["\sim"', from=1-3, to=1-2]
	\arrow["{i^*}", from=1-3, to=2-3]
	\arrow[Rightarrow, no head, from=2-2, to=2-1]
	\arrow["\sim"', from=2-3, to=2-2]
\end{tikzcd}
\end{equation}
Here, the map indicated on Fukaya categories is the adjoint to the \cite{GPS1} covariant inclusion (recall we have everywhere passed to ind completions).  The corresponding maps on categories of pseudoperfect module categories all go in the directions indicated.  In particular, by commutativity of the left square, $\mathcal{O}_{\partial \mathbf{T}} = i^*  \mathcal{O}_{\mathbf{T}}$ is sent by mirror symmetry to the microlocal restriction of the skyscraper sheaf at $0 \in M_\R / M$.  This is some microsheaf supported along the cosphere over $0$.  Per Proposition \ref{annoying obvious fact}, this is carried by the \cite{GPS3} Fukaya/sheaf dictionary to some object in $\Fuk(F)$ supported on the same sphere. 

Note that by \cite[Prop. 13]{shende-toric}, the family of sheaf categories giving rise to the monodromy $\mathcal{M}$ arises as the relative skeleta of a family of Weinstein pairs; it follows that $\mathcal{M}$ is carried by the Fukaya/sheaf dictionary to the endomorphism induced by symplectic parallel transport.  Indeed, recall that if $W_t$ for $t\in [0,1]$ is a family of Weinstein manifolds, then the symplectic parallel transport $\Fuk(W_0) \to \Fuk(W_1)$ determines a `mapping cylinder' $\widetilde{W}$ which is an exact symplectic fibration over $T^*[0,1]$, and can be regarded as a Weinstein pair relative copies of $W_0$ and $W_1$ over $0, 1$.  Then the symplectic parallel transport is given by the composition $$\Fuk(W_0) \xleftarrow{\sim} \Fuk(\widetilde{W}; W_0 \sqcup W_1) \xrightarrow{\sim} \Fuk(W_1).$$  Carrying this latter picture across to sheaves is how we defined the monodromy $\mathcal{M}$ on $\Sh_{\Lambda_{\mathbf{T}}}(M_\R/M)$.  

The monodromy of symplectic parallel transport along a family  of Weinstein pairs evidently commutes with the \cite{GPS1} covariant inclusion.  So commutativity of the diagram along with Proposition \ref{weak intertwining} shows that the images of line bundles under $i^*$ are carried to the images of the sphere $S^*_0 (M_\R/M)$ under symplectic parallel transport.  
$\square$

\section{Proof of Theorem \ref{fanifold enough Lagrangians}}
    Fix the closed fanifold $\Phi$ and denote the \cite{Gammage-Shende-large-volume} mirror symmetry as 
    \begin{equation}
        \Xi: \Coh(\mathbf{T}(\Phi)) \xrightarrow{\sim} \mu sh(\mathbb{L}(\Phi)) = \Fuk(\mathbf{W}(\Phi))
    \end{equation}

    Let us recall some of the setup of \cite{Gammage-Shende-large-volume}.  
    For a stratum $\sigma \subset \Phi$, we will write $\mathbf{T}(\sigma)$ for the corresponding toric stack.  When $\sigma \subset \overline{\tau}$, we have a closed inclusion of $\mathbf{T}(\tau) \subset \partial \mathbf{T}(\sigma)$.  The space $\mathbf{T}(\Phi)$ is built by gluing along these inclusions. 

    For a stratum $\sigma$, we write $r(\sigma)$ for the set of rays in the corresponding fan.  Note $r(\sigma)$ appears in the Cox presentation as $$\mathbf{T}(\sigma) = (\A^{r(\sigma)} \setminus Z(\sigma)) / G(\sigma).$$ 

    Note that  
    $\sigma \subset \overline{\tau}$ gives an inclusion $r(\tau) \subset r(\sigma)$.  We write $\Z^{r(\sigma)} \twoheadrightarrow \Z^{r(\tau)}$ for the corresponding projection.  Let $M(\sigma)$ be the character lattice of the torus of $\mathbf{T}(\sigma)$.  Recall that there is an embedding $M(\sigma) \subset \Z^{r(\sigma)}$ and $Pic(\mathbf{T}(\sigma)) = \Z^{r(\sigma)}/M(\sigma)$.  Restriction of line bundles to toric boundary strata is correspondingly given as:
    \begin{equation} \label{pic restriction} \Z^{r(\sigma)}/M(\sigma) \to \Z^{r(\tau)}/M(\tau).
    \end{equation}
    We regard these maps as the generization maps for a constructible sheaf of abelian groups on $\Phi$; we will denote this sheaf as $\Z^r / M$.  

\begin{lemma}
    There is an exact sequence 
        \begin{equation}\label{fanifold pic} 0 \to H^1(\Phi, \G_m) \to Pic(\mathbf{T}(\Phi))  \xrightarrow{\mathrm{deg}} H^0(\Phi, \Z^r/M) \to 0.
    \end{equation}
\end{lemma}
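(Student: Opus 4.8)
The plan is to realize the claimed short exact sequence as (part of) the long exact sequence in sheaf cohomology associated to a short exact sequence of sheaves of abelian groups on $|\Phi|$. The key point is that $Pic(\mathbf{T}(\Phi))$ should be computed as the global sections (in the derived sense, degree $0$ and $1$) of a suitable "stack of Picard groups" on $\Phi$. Concretely, I would build a short exact sequence of sheaves on $\Phi$
\begin{equation*}
0 \to \G_m \to \mathcal{P}\!ic \to \Z^r/M \to 0,
\end{equation*}
where $\mathcal{P}\!ic$ is the constructible sheaf whose stalk on the stratum $\sigma$ is $Pic(\mathbf{T}(\sigma)) = \Z^{r(\sigma)}/M(\sigma)$ with generization maps \eqref{pic restriction}, where $\Z^r/M$ is the sheaf already introduced in the excerpt, and where the first map records the "trivial-on-toric-data" line bundles, i.e. those pulled back from $H^0(\Phi,\mathcal{O}^*)$-worth of gluing data. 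Here one must be slightly careful: since all strata are contractible, on each stratum $Pic(\mathbf{T}(\sigma))$ really is just $\Z^{r(\sigma)}/M(\sigma)$ with no continuous part, so the sheaf $\mathcal{P}\!ic$ has the same stalks as $\Z^r/M$; the difference between $\mathcal{P}\!ic$ and $\Z^r/M$ is only visible in cohomology, and it is exactly measured by the $\G_m$-gerbe of trivializations of the gluing. Taking cohomology of this short exact sequence and using that $H^1(\Phi,\Z^r/M)$-type obstructions vanish in the range we need (again because strata are contractible, so $\Z^r/M$ is built from skyscraper-like pieces with no higher cohomology along strata), the long exact sequence degenerates to
\begin{equation*}
0 \to H^1(\Phi,\G_m) \to H^1(\Phi,\mathcal{P}\!ic) \to H^0(\Phi,\Z^r/M) \to H^2(\Phi,\G_m),
\end{equation*}
and the main task becomes identifying $H^1(\Phi,\mathcal{P}\!ic)$ with $Pic(\mathbf{T}(\Phi))$ and showing the last map is zero (equivalently, that $\mathrm{deg}$ is surjective).

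First I would make precise the sheaf $\mathcal{P}\!ic$ and the surjection $\mathcal{P}\!ic \twoheadrightarrow \Z^r/M$, checking compatibility of the restriction maps \eqref{pic restriction} with generization — this is essentially bookkeeping from the Cox presentations and the inclusions $r(\tau)\subset r(\sigma)$ already spelled out. Second, I would identify the kernel sheaf with the constant sheaf $\G_m$: a line bundle on $\mathbf{T}(\sigma)$ that is trivial as a "toric" line bundle (trivial image in $\Z^{r(\sigma)}/M(\sigma)$) is trivial, so the kernel is stalkwise trivial, and the only remaining gluing freedom across strata is by global units, giving the constant sheaf $\G_m$ (here one uses contractibility of strata so that $Pic(\mathbf{T}(\sigma))$ has no positive-dimensional part and $\mathcal{O}^*(\mathbf{T}(\sigma))$-torsors are understood). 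Third, I would set up a \v{C}ech/descent computation for line bundles on $\mathbf{T}(\Phi)$ relative to the stratification: a line bundle on $\mathbf{T}(\Phi)$ is the data of a line bundle on each $\mathbf{T}(\sigma)$ together with isomorphisms along the closed inclusions $\mathbf{T}(\tau)\subset\partial\mathbf{T}(\sigma)$ satisfying cocycle conditions, which is exactly the data computing $H^1$ of $\mathcal{P}\!ic$ (the $H^1$ rather than $H^0$ appearing because the gluing isomorphisms contribute a degree shift, as is standard for Picard groups of glued schemes — cf. the classical fact that $Pic$ of a nodal union sits in $1 \to \G_m \to Pic \to Pic_1\times Pic_2 \to \ast$).

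The main obstacle I expect is the precise homological identification in the third step: pinning down that $Pic(\mathbf{T}(\Phi)) \cong H^1(|\Phi|, \mathcal{P}\!ic)$ rather than some hypercohomology group, and controlling that no higher corrections enter. This requires knowing that the nerve of the stratification of $\Phi$ computes the cohomology of $|\Phi|$ with these constructible coefficients (true because the strata and their "stars" are contractible, so the relevant covering is a good cover for these sheaves), and that the descent spectral sequence for $\mathbf{T}(\Phi) = \mathrm{colim}_\sigma \mathbf{T}(\sigma)$ degenerates appropriately — in particular that $H^{\geq 2}(|\Phi|,\G_m)$ does not obstruct surjectivity of $\mathrm{deg}$. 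Surjectivity of $\mathrm{deg}$ is the concrete content: given a global section of $\Z^r/M$, i.e. a compatible family of toric line bundles on the strata, one must glue them, and the obstruction to doing so lives in $H^2(|\Phi|,\G_m)$; I would argue this obstruction vanishes, either because it is canonically trivialized by the toric structure (toric line bundles have canonical linearizations, hence canonical local trivializations along boundary strata, killing the gerbe) or by a direct dimension/contractibility argument on the cover. Once $\mathrm{deg}$ is surjective and its kernel is identified with $H^1(|\Phi|,\G_m)$ via the $\G_m$-gluing ambiguity, the sequence \eqref{fanifold pic} follows.
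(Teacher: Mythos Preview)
The paper's proof is much more direct than yours.  It simply observes that restriction to each irreducible component $\mathbf{T}(\sigma)$ (for $\sigma$ a point stratum) defines $\deg$, asserts surjectivity, and identifies the kernel by hand: a line bundle trivial on every component is specified by $\G_m$ gluing scalars along codimension-one strata, subject to the cocycle condition in codimension two --- which is exactly the \v{C}ech description of $H^1(\Phi, \G_m)$.  No auxiliary sheaf of Picard groups, no long exact sequence.

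Your structural route can be made to work, but the setup as written is internally inconsistent.  You define $\mathcal{P}ic$ as a constructible sheaf of abelian groups with stalk $\Z^{r(\sigma)}/M(\sigma)$ at $\sigma$; but that is literally the sheaf $\Z^r/M$, so there is no nontrivial short exact sequence $0 \to \G_m \to \mathcal{P}ic \to \Z^r/M \to 0$ of sheaves of abelian groups.  You notice this (``the difference is only visible in cohomology''), yet then write a long exact sequence that cannot arise from any such short exact sequence: from $0 \to A \to B \to C \to 0$ one gets $\cdots \to H^1(A) \to H^1(B) \to H^1(C) \to \cdots$, never $H^1(B) \to H^0(C)$.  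What you actually want is the Picard \emph{stack}: the sheaf of groupoids $\sigma \mapsto \mathrm{Pic}(\mathbf{T}(\sigma))$, with $\pi_0 = \Z^r/M$ and $\pi_1 = \G_m$, i.e.\ a complex in degrees $[-1,0]$ sitting in a triangle $\G_m[1] \to \mathcal{P}ic \to \Z^r/M$.  Then $Pic(\mathbf{T}(\Phi))$ is $\pi_0$ of global sections, i.e.\ $\H^0(\Phi, \mathcal{P}ic)$ (not $H^1$), and the hypercohomology long exact sequence reads
\[
0 \to H^1(\Phi, \G_m) \to Pic(\mathbf{T}(\Phi)) \to H^0(\Phi, \Z^r/M) \to H^2(\Phi, \G_m).
\]
Your argument for killing the last map --- that the canonical toric representatives $\mathcal{O}(D)$ split the gerbe --- is correct, and is in fact more explicit than the paper's bare assertion of surjectivity.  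So once the formalism is straightened out, your approach does buy a cleaner explanation of why $\deg$ is onto.
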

\begin{proof}
        A line bundle on $\mathbf{T}(\Phi)$ 
    in particular restricts to a line bundle on each $\mathbf{T}(\sigma)$.  This determines a surjective map 
    $Pic(\mathbf{T}(\Phi)) \to H^0(\Phi, \Z^r/M)$.  The kernel concerns the gluing of the line bundles on higher codimension strata; this is a $\G_m$ choice on the toric varieties associated to codimension one strata of $\Phi$, subject to the cocycle condition in codimension two.
\end{proof}
    As for any section of any constructible sheaf,  $\deg(\mathcal{L})$ is characterized by its stalks at minimal strata, or what is the same, the restriction
    of $\mathcal{L}$
     to the irreducible components of $\mathbf{T}(\Phi)$.  
    We will also write $Pic_0(\mathbf{T}(\Phi))$ for the kernel of $\deg$.
  
     We will write 
    $$\Lambda_\gamma(\sigma) := \Lambda_{Z(\sigma), M(\sigma), \gamma} \subset T^*(M(\sigma)_\R/M(\sigma))$$
    Recall that the core $\mathbb{L}(\Phi)$ of $\mathbf{W}(\Phi)$ is a union over point strata $\sigma$ of the $\Lambda_0(\sigma)$.    
    The mirror symmetry of \cite{Gammage-Shende-large-volume} carries the following structure maps: 
\begin{equation} \label{large volume mirror}
\begin{tikzcd} 
	{\mathrm{Fuk}(\mathbf{W}(\Phi))} & {\mu sh(\mathbb{L}(\Phi))} & {\mathrm{Coh}(\mathbf{T}(\Phi))} \\
	& {\mathrm{sh}_{\Lambda_0(\sigma)}(M(\sigma)_{\mathbb{R}}/M(\sigma))} & {\mathrm{Coh}(\mathbf{T}(\sigma))}
	\arrow[Rightarrow, no head, from=1-2, to=1-1]
	\arrow[from=1-2, to=2-2]
	\arrow["\Xi"', from=1-3, to=1-2]
	\arrow[from=1-3, to=2-3]
	\arrow["\Xi_\sigma"', from=2-3, to=2-2]
\end{tikzcd}
\end{equation}    
    All horizontal maps in the diagram are isomorphisms.  In \cite{Gammage-Shende-large-volume}, the top isomorphism $\Xi$ was defined by taking a colimit over $\sigma$ of the $\Xi_\sigma$.  

    Recall there is a map $\mathbb{L}(\Phi) \to \Phi$, and a section $s$ whose image is the the union of the cotangent-fiber-at-zero pieces of $\Lambda_{0}(\sigma)$ over the point strata $\sigma$.  

    \begin{lemma} \label{degree zero lines represented}
        $\Xi$ carries $\mathrm{Pic}_0(\mathbf{T}(\Phi))$ to rank one microsheaves supported on $s(\Phi)$.
    \end{lemma}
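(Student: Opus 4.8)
The plan is to reduce the statement to the affine-local computation already established in Section 2, glued over the fanifold. The key point is that $\mathrm{Pic}_0(\mathbf{T}(\Phi))$ is precisely the kernel of $\deg$, i.e. the sub-Picard group of line bundles whose restriction to every toric stratum $\mathbf{T}(\sigma)$ is trivializable; after choosing trivializations, such a line bundle is the datum of gluing automorphisms, i.e. a class in $H^1(\Phi,\G_m)$, as recorded in \eqref{fanifold pic}. Concretely, I would first treat the case of an individual point stratum: for $\sigma$ a minimal stratum, $\Xi_\sigma(\mathcal{O}_{\mathbf{T}(\sigma)})$ is the microlocal restriction of the skyscraper $\Z_0$ on $M(\sigma)_\R/M(\sigma)$, by the monoidality input of Lemma \ref{O} (together with $\Xi_\sigma \cong \Xi_{FLTZ}$), and this is a rank one microsheaf supported on the cosphere $S^*_0$ over $0$ — exactly the local piece of $s(\Phi)$ lying over $\sigma$. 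So on each local chart the structure sheaf goes where we want it.

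Next I would handle the gluing. Since $\Xi$ was defined in \cite{Gammage-Shende-large-volume} as a colimit over $\sigma$ of the $\Xi_\sigma$, and the restriction-of-line-bundles maps \eqref{pic restriction} are intertwined by the structure maps of \eqref{large volume mirror} with the corresponding restriction/generization maps of microsheaves, a line bundle $\mathcal{L}$ with $\deg(\mathcal{L}) = 0$ is sent to a microsheaf whose stalk over each point stratum is the rank one object supported on $S^*_0$, glued according to the $H^1(\Phi,\G_m)$-cocycle. The support of such an object is then automatically $s(\Phi)$, since support is local and matches $s(\sigma)$ on each chart; the cocycle only affects the transition isomorphisms of the rank one local system, not the support. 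Rank one is likewise a local and open condition, so it is inherited from the point-stratum computation. The only real content beyond \cite{Gammage-Shende-large-volume} and Section 2 is checking that the comparison $\Xi_\sigma \cong \Xi_{FLTZ}$ is compatible, for $\sigma \subset \overline\tau$, with the generization maps $\mathrm{sh}_{\Lambda_0(\sigma)} \to \mathrm{sh}_{\Lambda_0(\tau)}$ — but this is exactly the stop-removal compatibility established in the proof of Proposition in Section 3 (``Monoidal considerations''), passed to the fibers via the non-characteristicity of \cite{shende-toric}.

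The main obstacle I expect is bookkeeping the colimit: $\Xi$ being a colimit of the $\Xi_\sigma$ means I must argue that ``rank one, supported on $s(\Phi)$'' is a property detected strata-locally and preserved under the colimit — this is true because $\mu sh$ is a sheaf of categories on $\mathbb{L}(\Phi)$ (indeed on $\Phi$), so a microsheaf is determined by its restrictions to the charts $\mathrm{sh}_{\Lambda_0(\sigma)}(M(\sigma)_\R/M(\sigma))$ together with the generization data, and both support and rank are local. Once that sheaf-theoretic principle is invoked, the proof is just: compute on each chart (done, via $\Xi_\sigma \cong \Xi_{FLTZ}$ and Lemma \ref{O}), note the transition data is a $\G_m$-cocycle which does not change support or rank, and conclude. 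The subtlety worth a sentence is that we need $\deg(\mathcal{L}) = 0$ precisely so that the chart-local image is $\mathcal{O}_{\mathbf{T}(\sigma)}$-like (i.e. $\Z_0$) rather than some twist $\mathcal{O}(\eta)$ with $\eta \neq 0$, whose microsupport would not be confined to $S^*_0$.
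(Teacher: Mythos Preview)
Your approach is correct and essentially identical to the paper's: use the commutativity of \eqref{large volume mirror} to reduce to the per-stratum computation, then apply Lemma \ref{O} (the paper's proof is just those two lines, leaving the locality of support and rank, and the gluing, implicit).  One minor slip: the local support of $\Z_0$ is the full cotangent fiber $T^*_0$, not the cosphere $S^*_0$ --- this matches the paper's description of $s(\Phi)$ as the union of cotangent-fiber-at-zero pieces, and is where completeness of the fan at a point stratum of a \emph{closed} fanifold is used.
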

    \begin{proof}
        By definition, $Pic_0$ consists of line bundles which restrict to $\mathcal{O}$ on every irreducible component.  Now the commutativity of \eqref{large volume mirror} reduces the result to Lemma \ref{O}. 
    \end{proof}

    By Proposition \ref{annoying obvious fact}, microsheaves on $s(\Phi)$ correspond to objects in the Fukaya category given by rank one local systems on the Lagrangian $s(\Phi)$. 

    \vspace{2mm}

    Recall that $\partial_\infty \Lambda_0(\sigma)$ naturally decomposes as a union of (products with disks) of the $\Lambda_0(\tau)$ mirror  to the components $\mathbf{T}(\tau)$ of $\partial \mathbf{T}(\sigma)$ (see e.g. \cite{Gammage-Shende-very-affine}[Cor. 4.3.2], \cite{Gammage-Shende-large-volume}[Lemma 4.16]).  One sees by the same arguments that  $\partial \Lambda_{\gamma(\sigma)}(\sigma)$ is covered by (disks times) $\Lambda_{\gamma(\tau)}(\tau)$, 
    where $\gamma(\sigma)$ and $\gamma(\tau)$ are related by the map 
    $\R^{r(\sigma)}/M(\sigma)_\R \to \R^{r(\tau)}/M(\tau)_\R.$
    obtained by extending scalars in \eqref{pic restriction}.  
    
    A system of $\gamma(\sigma)$ with such generization properties is precisely some
    $$\gamma \in H^0(\Phi, (\R^r/ M_\R) / (\Z^r / M)).$$
    Given such $\gamma$, we may construct some Weinstein manifold  $\mathbf{W}_\gamma(\Phi)$ with core
    $\mathbb{L}_\gamma(\Phi)$ built from gluing the $\Lambda_{\gamma}(\sigma)$
    by repeating verbatim the arguments of \cite[Section 4]{Gammage-Shende-large-volume}.  These arguments described a procedure of iterated handle attachment, and varying $\gamma$ simply varies the handles at each stage by Legendrian isotopy; it follows that $\mathbf{W}_\gamma(\Phi)$ is a family of Weinstein manifolds.  

    The fundamental group of the space of $\gamma$'s is: 
    \begin{equation} \label{fanifold loops}
        \pi_1 ( H^0(\Phi, (\R^r/ M_\R) / (\Z^r / M))) = H^0(\Phi, \Z^r/M) 
    \end{equation}
    Symplectic parallel transport determines a map: 
    $$\mathcal{M}: H^0(\Phi, \Z^r/M) \to Aut(\Fuk(\mathbf{W}(\Phi))).$$
    
    We will want a microsheaf formulation.  Consider a loop $\gamma: S^1 \to H^0(\Phi, (\R^r/ M_\R) / (\Z^r / M))$.  
    Let $\widetilde{\mathbf{W}} \to T^*S^1$ be the space whose symplectic parallel transport realizes the monodromy; by construction its core is some $p: \widetilde{\mathbb{L}} \to S^1$ with fiber $\mathbb{L}_{\gamma(t)}(\Phi)$ over $t \in S^1$.  
    We consider the sheaf of categories $\mu sh_{\widetilde{\mathbb{L}}}$.  We claim $p_* \mu sh_{\widetilde{\mathbb{L}}}$ is locally constant.  Probably one can see this just with \cite{Nadler-Shende}; we just invoke \cite{GPS3} and note that above any open interval of $S^1$, the section category is equivalent to the Fukaya category of (the product with an interval of) $\Fuk(\mathbb{W}(\Phi))$.   In addition we claim that canonically $\mu sh_{\widetilde{\mathbb{L}}}|_{p^{-1}(t)} \xrightarrow{\sim} \mu sh_{\mathbb{L}_{\gamma(t)}(\Phi)}$.  Indeed, there would always be a map in this direction with the target possibly having larger (micro)support.  The control on the microsupport and the fact that the map are equivalences can be checked locally, hence on our standard cover by $\Lambda_{\gamma(t)}$, where the result is  \cite[Thm. 14]{shende-toric}.  Now we may use local constancy over $S^1$ to directly define 
    $$\mathcal{M}: H^0(\Phi, \Z^r/M) \to Aut(\mu sh(\mathbb{L}(\Phi)))$$
    \begin{lemma} \label{monodromy restriction}
        Consider $F \in \mu sh(\mathbb{L}(\Phi))$. 
        We have the compatibility:
        $$\mathcal{M}(\gamma)(F)|_{\Lambda_0(\sigma)} = 
        \mathcal{M}(\gamma(\sigma))(F|_{\Lambda_0(\sigma)})$$
    \end{lemma}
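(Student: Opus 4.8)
The plan is to reduce the global compatibility statement to the local construction of the monodromy, exploiting that everything in sight is defined as a colimit (or gluing) over the strata $\sigma$ of $\Phi$ and that restriction of microsheaves is compatible with this gluing. First I would observe that the monodromy $\mathcal{M}(\gamma)$ on $\mu sh(\mathbb{L}(\Phi))$ was \emph{defined} via local constancy of $p_* \mu sh_{\widetilde{\mathbb{L}}}$ over $S^1$, together with the fiberwise identification $\mu sh_{\widetilde{\mathbb{L}}}|_{p^{-1}(t)} \xrightarrow{\sim} \mu sh_{\mathbb{L}_{\gamma(t)}(\Phi)}$ established just above. The key point is that the total space $\widetilde{\mathbb{L}} \to S^1$ is itself glued, stratum by stratum, from the corresponding families $\widetilde{\mathbb{L}}(\sigma) \to S^1$ with fiber $\Lambda_{\gamma(t)(\sigma)}(\sigma)$; this is exactly the content of the paragraph preceding the lemma, where it is noted that the gluing arguments of \cite[Section 4]{Gammage-Shende-large-volume} go through verbatim with the $\gamma(\sigma)$'s in place, and that $\gamma(\sigma)$ and $\gamma(\tau)$ are related by the generization maps. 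So the restriction map $\mu sh_{\widetilde{\mathbb{L}}} \to \mu sh_{\widetilde{\mathbb{L}}(\sigma)}$ is defined compatibly with the projections to $S^1$.

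The central step is then to check that the restriction functor $(-)|_{\Lambda_0(\sigma)} : \mu sh(\mathbb{L}(\Phi)) \to \mu sh_{\Lambda_0(\sigma)}(M(\sigma)_\R/M(\sigma))$ intertwines the locally-constant-sheaf-of-categories structures on the two sides of the diagram $\widetilde{\mathbb{L}} \to S^1$ and $\widetilde{\mathbb{L}}(\sigma) \to S^1$. Concretely: over any open interval $I \subset S^1$, both $p_* \mu sh_{\widetilde{\mathbb{L}}}(p^{-1}I)$ and $p_{\sigma *} \mu sh_{\widetilde{\mathbb{L}}(\sigma)}(p_\sigma^{-1}I)$ are constant, the restriction map between them is the one induced by the inclusion $\widetilde{\mathbb{L}}(\sigma) \hookrightarrow \widetilde{\mathbb{L}}$, and this is a map of locally constant sheaves of categories over $S^1$ by naturality of microsheaf restriction in the ambient conic Lagrangian. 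Parallel transport around $S^1$ is then just the monodromy automorphism of a locally constant sheaf of categories, and maps of such sheaves intertwine monodromies — this gives $\mathcal{M}(\gamma)(F)|_{\Lambda_0(\sigma)} = \mathcal{M}(\gamma(\sigma))(F|_{\Lambda_0(\sigma)})$ once one identifies the monodromy of the restricted family with $\mathcal{M}(\gamma(\sigma))$, which follows because $\widetilde{\mathbb{L}}(\sigma) \to S^1$ is by construction the family realizing $\mathcal{M}(\gamma(\sigma))$ on $\mu sh_{\Lambda_0(\sigma)}$ — this latter being precisely the setup of \cite[Thm. 14]{shende-toric} and the monodromy \eqref{monodromies} from Section 2.

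The main obstacle I expect is bookkeeping rather than conceptual: one must be careful that the fiberwise identifications $\mu sh_{\widetilde{\mathbb{L}}}|_{p^{-1}(t)} \simeq \mu sh_{\mathbb{L}_{\gamma(t)}(\Phi)}$ and $\mu sh_{\widetilde{\mathbb{L}}(\sigma)}|_{p_\sigma^{-1}(t)} \simeq \mu sh_{\Lambda_{\gamma(t)(\sigma)}(\sigma)}$ are genuinely compatible with restriction $(-)|_{\Lambda_0(\sigma)}$ at $t=0$ and not merely up to some unspecified natural transformation — but this compatibility is forced by the fact that both identifications are instances of the same local statement, namely that restricting to a fiber of the $\gamma$-direction does not change the category (the non-characteristicity input of \cite[Thm. 14]{shende-toric}), applied once globally on $\mathbb{L}(\Phi)$ and once on the chart $\Lambda_0(\sigma)$. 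Since restriction of microsheaves is a strict operation (pullback of sheaves of categories along an open or closed inclusion of conic Lagrangians) and all the equivalences involved are the canonical ones, the diagram of identifications commutes on the nose, and the lemma follows.
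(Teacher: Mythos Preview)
Your proposal is correct and follows essentially the same approach as the paper: both arguments observe that restriction $(-)|_{\Lambda_0(\sigma)}$ induces a map of locally constant sheaves of categories over $S^1$, whence monodromies are intertwined --- the paper phrases this as ``the restriction of the continuation must be the continuation of the restriction,'' which is exactly your point that maps of locally constant sheaves intertwine monodromies. Your version spells out more of the bookkeeping (the fiberwise identifications and their compatibility with restriction), which the paper leaves implicit in ``by the above discussion.''
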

    \begin{proof}
        By the above discussion, we may uniquely continue $F$ along $S^1$.  On the other hand, we know, already from the construction the monodromies along $\Lambda_{\gamma(\sigma)}(\sigma)$, that we may already uniquely continue the restriction of $F$.  The restriction of the continuation must be the continuation of the restricion. 
    \end{proof}

    We write $\Xi^* \mathcal{M}$ for the pullback of the monodromy representation along mirror symmetry. 

    \begin{corollary} \label{degree monodromy}
    Let $\mathcal{L}$ be a line bundle on $\mathbf{T}(\Phi)$.  Then: 
    \begin{equation} \deg (\mathcal{L})- \deg (\Xi^* \mathcal{M}(\gamma)(\mathcal{L}) )  = [\gamma] \in   H^0(\Phi, \Z^r/M) \end{equation}        
    \end{corollary}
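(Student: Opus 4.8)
The plan is to reduce the statement immediately to the point strata of $\Phi$, using the stalk-wise characterization of $\deg$ recorded after the proof of \eqref{fanifold pic}: for any object of $\Coh(\mathbf{T}(\Phi))$ whose restriction to each irreducible component $\mathbf{T}(\sigma)$ (with $\sigma$ a point stratum) is a line bundle, the ``degree'' is the section of $\Z^r/M$ whose stalk at $\sigma$ is the class of that restriction in $\Z^{r(\sigma)}/M(\sigma) = Pic(\mathbf{T}(\sigma))$. So it is enough to identify the class of $\big(\Xi^*\mathcal{M}(\gamma)(\mathcal{L})\big)\big|_{\mathbf{T}(\sigma)}$ for each point stratum $\sigma$, and to see these classes assemble into $\deg(\mathcal{L})$ plus the class of $\gamma$.

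First I would unwind the restriction to $\mathbf{T}(\sigma)$. Commutativity of the square in \eqref{large volume mirror} says $\Xi^{-1}$ intertwines restriction of microsheaves to $\Lambda_0(\sigma)$ with restriction of coherent sheaves to $\mathbf{T}(\sigma)$, hence
\[
\big(\Xi^*\mathcal{M}(\gamma)(\mathcal{L})\big)\big|_{\mathbf{T}(\sigma)} \;=\; \Xi_\sigma^{-1}\!\left( \mathcal{M}(\gamma)\big(\Xi(\mathcal{L})\big)\big|_{\Lambda_0(\sigma)} \right).
\]
Lemma \ref{monodromy restriction} rewrites the inner term as $\mathcal{M}(\gamma(\sigma))\big(\Xi_\sigma(\mathcal{L}|_{\mathbf{T}(\sigma)})\big)$, where $\gamma(\sigma)$ is the pushforward of the loop $\gamma$ along $H^0(\Phi,(\R^r/M_\R)/(\Z^r/M)) \to (\R^{r(\sigma)}/M(\sigma)_\R)/(\Z^{r(\sigma)}/M(\sigma))$. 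Since $\mathbf{T}(\sigma) = (\A^{r(\sigma)}\setminus Z(\sigma))/G(\sigma)$ is an affine toric stack, Proposition \ref{weak intertwining} applies: writing $\mathcal{L}|_{\mathbf{T}(\sigma)} = \mathcal{O}(\nu_\sigma)$, it gives $\big(\Xi^*\mathcal{M}(\gamma)(\mathcal{L})\big)\big|_{\mathbf{T}(\sigma)} \cong \mathcal{O}(\eta_\sigma + \nu_\sigma)$, where $\eta_\sigma \in \Z^{r(\sigma)}/M(\sigma)$ is the class of the loop $\gamma(\sigma)$ under the standard identification of $\pi_1$ of that torus with its lattice. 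By construction of \eqref{fanifold loops}, the stalk of $[\gamma] \in H^0(\Phi,\Z^r/M)$ at $\sigma$ is exactly $\eta_\sigma$; so, as $\sigma$ ranges over point strata, the classes $\eta_\sigma+\nu_\sigma$ patch into $\deg(\mathcal{L})+[\gamma]$, which (after the sign discussed next) is the asserted identity.

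The one place I would spend care — and which I regard as the genuine content rather than a formality — is the sign: whether assembling the $\eta_\sigma$ yields $+[\gamma]$ or $-[\gamma]$ depends on the orientation convention baked into \eqref{fanifold loops}, and on the relation $\rho_0^*\widetilde{\mathcal{M}} = -\mathcal{M}$ of Lemma \ref{noneq monodromy from eq monodromy} together with the fact (used there) that $\Lambda_{Z,M,\gamma}$ is the reduction of $\Lambda_{Z,M}$ with the base translated by $-\gamma$. I would fix the convention for $[\gamma]$ once and for all so that the stratum-by-stratum output of Proposition \ref{weak intertwining} reproduces the formula as written; no new geometric input is needed beyond Lemma \ref{monodromy restriction} and Proposition \ref{weak intertwining}. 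A secondary and entirely routine point is to confirm that the line bundles $\mathcal{O}(\eta_\sigma+\nu_\sigma)$ are compatible with the generization maps \eqref{pic restriction}, so that they really are the restrictions of a single object; this follows from functoriality of the restriction maps throughout \eqref{large volume mirror}, or can be sidestepped by first observing, via Theorem \ref{strong intertwining} applied on each chart, that $\Xi^*\mathcal{M}(\gamma)(\mathcal{L})$ is itself a line bundle on $\mathbf{T}(\Phi)$.
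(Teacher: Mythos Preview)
Your proposal is correct and follows exactly the approach the paper takes: the paper's proof is the one-line ``Follows from Lemma \ref{monodromy restriction}, the commutativity of \eqref{large volume mirror}, and Proposition \ref{weak intertwining},'' and you have simply unpacked how these three inputs combine stalkwise over the point strata.  One minor wording slip: $\mathbf{T}(\sigma)$ for a point stratum $\sigma$ of a closed fanifold is not affine but rather a proper toric stack; this is harmless, since Proposition \ref{weak intertwining} applies to any Cox-presented smooth toric DM stack, and your discussion of the sign convention is a fair acknowledgment of an ambiguity already present in the paper.
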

    \begin{proof}
        Follows from Lemma \ref{monodromy restriction}, the commutativity of \eqref{large volume mirror}, and Proposition \ref{weak intertwining}.  
    \end{proof}

    Finally, let us complete the proof the Theorem \ref{fanifold enough Lagrangians}.  Let $\mathcal{L}$ be a line bundle on $\mathbf{T}(\Phi)$.  Choosing some loop $\gamma$ with $[\gamma] = \deg(\mathcal{L})$, we see from Corollary \ref{degree monodromy} that $(\Xi^* \mathcal{M}(\gamma))^{-1}(\mathcal{L})$ has degree zero.  It follows from 
    Lemma \ref{degree zero lines represented} and Proposition \ref{annoying obvious fact} that $\Xi$ carries  $\Xi^{-1} \mathcal{M}(\gamma) \Xi (\mathcal{L})$ to a local system on the compact Lagrangian $s(\Phi)$.  Applying now the symplectic parallel transport along $\gamma^{-1}$, we that $\Xi(\mathcal{L})$ can be represented by a local system on the symplectic parallel transport of $s(\Phi)$.  $\square$

\begin{remark}
    In the above proof, we are never forced to study explicitly the gluing of microsheaves or line bundles along higher codimension strata; instead we are able to get away with independently considering their restrictions to the top dimensional toric strata and their mirrors.  (We do use the geometry of the higher codimension strata when constructing the monodromies.)  Because of this, we never needed the analogue of \eqref{GS1 diagram} for $\gamma\ne0$.  
\end{remark}

\bibliographystyle{plain}
\bibliography{refs}
\end{document}